\documentclass[12pt,reqno]{amsart}%
\usepackage{amsmath, amsfonts, amssymb, amsthm, amscd, amsbsy,mathtools }
\usepackage[usenames,dvipsnames,svgnames,x11names,hyperref]{xcolor}
\usepackage{geometry}
\usepackage{graphicx,extarrows}
\usepackage{hyperref,url}
\usepackage{amsmath}
\usepackage{amsfonts}
\usepackage{amssymb}%

\usepackage{tabularx}
\usepackage{booktabs}
\usepackage{array}
\usepackage{ragged2e}

\newcolumntype{Y}{>{\RaggedRight\arraybackslash}X}
\providecommand{\U}[1]{\protect\rule{.1in}{.1in}}

\usepackage{geometry}
\usepackage{hyperref}
\hypersetup{
	colorlinks=true,
	allcolors=blue, 
}

\renewcommand{\epsilon}{\varepsilon}
\renewcommand{\phi}{\varphi}

\selectfont

\theoremstyle{plain}
\newtheorem{theorem}{Theorem}[section]

\theoremstyle{definition}

\newtheorem{remark}{Remark}

\theoremstyle{plain}
\newtheorem{lemma}[theorem]{Lemma}

\newtheorem{prop}[theorem]{Proposition}

\newtheorem{corollary}[theorem]{Corollary}

\newtheoremstyle{named}{}{}{\itshape}{}{\bfseries}{.}{.5em}{#1 \thmnote{#3}}
\theoremstyle{named}
\newtheorem*{namedthm*}{Theorem}

\newcommand{\R}{\mathbb{R}}
\newcommand{\dd}{\,\mathrm{d}}
\newcommand{\ee}{\mathrm{e}}

\begin{document}
\title{$L^p$-ASYMPTOTIC PROFILES FOR THE HEAT EQUATION WITH
HARDY POTENTIAL}

\author{Radu Ordean}
\address{
Department of Mathematics and Computer Science,
University of Bucharest,
Bucharest, Romania}
\email{raduordean@gmail.com}
\begin{abstract} For radial initial data, we construct explicit higher-order \(L^p(\mathbb R^N)\)-asymptotic profiles for the heat equation with Hardy potential. These profiles, denoted $A_n$ are obtained from the small-argument expansion, up to an arbitrary order \(n\), of the modified Bessel function appearing in the radial Hardy heat kernel. If $u$ is the mild solution generated by this kernel, we prove that the corresponding remainder $u(x,t)-A_n(x,t)$ admits a polynomial decay depending on $n$ in \(L^p(\mathbb R^N)\) as \(t\to\infty\). We also treat the non-radial case through spherical harmonics: each angular mode evolves according to a radial Hardy heat equation with a modified parameter, leading to finite and infinite angular expansion versions of the asymptotic profile under suitable summability assumptions. \end{abstract}

\maketitle
\section{Introduction}

The heat equation with inverse-square potential
\begin{equation}\label{problem}
    \begin{cases}
    u_t(x,t)=\Delta u(x,t)+\dfrac{\lambda}{|x|^2}u(x,t),\quad (x,t)\in \R^N\times(0,\infty)\\
    u(x,0)=u_0(x),\quad x\in \R^N
    \end{cases}
\end{equation}

is naturally associated with the Hardy inequality. It has been extensively studied since the work of Baras and Goldstein
\cite{BG}, who showed the phenomenon of instantaneous blow-up:
if \(\lambda>\lambda_*:=\left(\frac{N-2}{2}\right)^2\), then the problem
admits no positive local-in-time solution. Moreover, in the subcritical
and critical range \(0<\lambda\le \lambda_*\), they obtained necessary and
sufficient conditions on the initial data for the existence of a
nonnegative solution. The number $
    \lambda_*=\left(\frac{N-2}{2}\right)^2 $
is precisely the optimal constant in the Hardy inequality \cite{BM}:
\[
    \int_{\mathbb R^N} |\nabla u(x)|^2\,dx
    \ge
    \lambda_*
    \int_{\mathbb R^N}\frac{|u(x)|^2}{|x|^2}\,dx,
    \qquad u\in C_c^\infty(\mathbb R^N),
\]
and therefore plays a fundamental role in the qualitative behaviour of the equation. 

\subsection*{Motivation}

The study of large-time asymptotic profiles is a classical way of understanding the
dominant behaviour of parabolic equations. For the standard heat equation
\begin{equation}
    \begin{cases}
           u_t(x,t)=\Delta u(x,t) \qquad & (x,t)\in\mathbb R^N\times (0,\infty),\\
           u(x,0)=u_0(x)\quad &x\in \R^N,
    \end{cases}
\end{equation}
solutions with $u_0\in L^1(\R^N)$ initial data are asymptotically described by the Gaussian
heat kernel (see \cite{ZuazuaNotes}, Theorem 1.1). More precisely, if $
    M=\int_{\mathbb R^N}u_0(x)\,dx, $
then
\[
    \lim_{t\to \infty} t^{\frac N2(1-\frac1p)}
    \left\|
        u(\cdot,t)
        -
        M G(\cdot,t)
    \right\|_{L^p(\mathbb R^N)}
    =0,\qquad 1\leq p\leq \infty.
\]
Thus, the large-time behaviour is
determined only by the total mass of the initial datum.

For the problem \eqref{problem}, the inverse-square potential changes this picture. The singularity at the origin suggests that the total mass should be replaced
by a weighted moment adapted to the Hardy operator. 

Large-time behaviour for heat equations with inverse-square potentials has been
studied in several directions, beginning with the foundational work of Baras and
Goldstein~\cite{BG} on existence and instantaneous blow-up, and with the work of
Vázquez and Zuazua~\cite{VZ} on the role of Hardy inequalities in the asymptotic
analysis. More recently, Cazacu, Ignat and Manea~\cite{CIM} studied the
large-time behaviour of the heat equation with Hardy inverse-square potential on
corner spaces \(\mathbb R^{N-k}\times(0,\infty)^k\), obtaining optimal
polynomial decay rates and the first asymptotic profile in \(L^2\). Heat kernel
estimates for Schroedinger operators with Hardy-type potentials were obtained,
among others, by Moschini and Tesei~\cite{MT05}, Milman and Semenov~\cite{MS,MS04},
Filippas, Moschini and Tertikas~\cite{FMT}, and Ishige, Kabeya and
Ouhabaz~\cite{IKO}. Closer to the present problem, Pilarczyk~\cite{P} obtained
asymptotic results for the Hardy heat equation in weighted \(L^p(\mathbb R^N)\)
norms, for \(1\leq p\leq\infty\), using suitable estimates for the Hardy heat
kernel.

In this paper, we study the \(L^p(\mathbb R^N)\)-asymptotic behaviour of
solutions to the heat equation with inverse-square potential. In the radial
case, we construct explicit higher-order asymptotic profiles depending on
weighted moments of the initial datum. More precisely, for each
positive integer $n$, we construct a profile \(A_n\) such that, for admissible
values of \(p\),
\[
\lim_{t\to\infty}
t^{\gamma_p+n}
\left\|
u(\cdot,t)-A_n(\cdot,t)
\right\|_{L^p(\mathbb R^N)}
=0.
\]
where \(\gamma_p\) is defined below. Thus, each additional term in the
asymptotic expansion improves the decay rate of the remainder by one power of
\(t^{-1}\).

We also discuss the sharpness of this decay rate. Finally, by decomposing non-radial data into spherical harmonics, we extend the construction to finite angular expansions and, under an additional summability condition on the angular coefficients, to infinite spherical harmonic expansions.

\textbf{Notations.}
Throughout this article, we set
\[N\in \mathbb{N},\qquad N\geq 3,\qquad 0\leq\lambda\leq\frac{(N-2)^2}{4}:=\lambda_*\]
\[\mu=\frac{N-2}{2},\qquad \nu=\sqrt{\mu^2-\lambda},\qquad 1\leq p< \infty\]

\begin{equation}\label{gammapdef}
\gamma_p
:=
\frac N2\left(1-\frac1p\right)
-\frac{\mu-\nu}{2}.
\end{equation}
\begin{equation}\label{gammapldef}
    \gamma_{p,\ell}
=
\frac N2\left(1-\frac1p\right)
-
\frac{\mu-\nu_\ell}{2},\qquad \nu_{\ell}=\sqrt{(\mu+\ell)^2-\lambda}.
\end{equation}
For \(j\in\mathbb N\), we define the weighted moments
\begin{equation}\label{momentsdef}
    M_{\nu,j}(u_0)
:=
\int_{\mathbb R^N}
|x|^{\nu-\mu+2j}u_0(x)\,dx.
\end{equation}
For \(\alpha\ge0\) and \(k\in\mathbb N_0\), define
\[
\|f\|_{L^1_{\alpha,k}}
:=
\int_{\mathbb R^N}
|x|^{\alpha-\mu}
\bigl(1+|x|^{2k}\bigr)
|f(|x|)|\,dx.
\]
and 
\[L^1_{\alpha,k}(\R^N):=\left\{f \ \ \text{ measurable},\ \  \|f\|_{L^1_{\alpha,k}}<\infty\right\}\]
For radial initial data \(u_0\), we denote by
\[
u(x,t):=S_\lambda(t)u_0(x)=\int_{\mathbb R^N}
K(x,y,t)u_0(y)\,dy,
\]
the mild solution of the Hardy heat equation \eqref{problem}, where the radial Hardy heat kernel is given by
\begin{equation}\label{radialkerneldef}
    K(x,y,t) = \frac1{2t}(|x||y|)^{-\mu}\exp\left(-\frac{|x|^2+|y|^2}{4t}\right)I_\nu\left(\frac{|x||y|}{2t}\right),
\end{equation}
where \(I_\nu\) denotes the modified Bessel function of the first kind (see, for example, \cite{W}). Although the radial heat kernel formula is known (for example \cite[Theorem 3.18.1]{CCFI}), a complete derivation of the radial and
non-radial kernel in a form adapted to the Cauchy problem \eqref{problem} is not easy to find in the literature. For this reason, we include in the Appendix section a self-contained derivation based on spherical harmonics (see, for example, \cite{ABR}) and the Hankel transform (see \cite{EK}).

We now define the asymptotic profile up to order \(n\):
\begin{equation}\label{asymptoticprofiledef}
    A_n(x,t):=\sum_{j=0}^n \mathcal{P}_j(x,t),
\end{equation}
where for $j=0,1,\dots,n$
\begin{equation}\label{pndef}
\mathcal{P}_j(x,t):=M_{\nu,j}(u_0)t^{-1-\nu-j}|x|^{\nu-\mu}e^{-\frac{|x|^2}{4t}}P_j\left(\frac{|x|^2}{t}\right),
\end{equation}
with
\[P_j(s):=\sum_{m=0}^{j}\frac{(-1)^{j-m}}{2^{2(\nu+j+m)+1}m!(j-m)!\Gamma(m+\nu+1)}
s^m.\]

\section{Main results}
\begin{theorem}\label{asymptoticprofilethm}
Let \(u_0:(0,\infty)\to\mathbb R\) be radial such that
\begin{equation}
    \int_{\mathbb R^N}
|x|^{\nu-\mu}\bigl(1+|x|^{2n}\bigr)|u_0(|x|)|\,dx<\infty.
\end{equation}
and Let \(u(\cdot,t)=S_\lambda(t)u_0\) be the mild solution to \eqref{problem} given by the radial kernel formula \eqref{radialkerneldef}, that is,
\[
u(x,t)
=
\int_{\mathbb R^N}
K(x,y,t)u_0(y)\,dy.
\] 
Then
\[
\lim_{t\to\infty}
t^{\gamma_p+n}
\left\|
u(\cdot,t)
-
A_n(\cdot,t)
\right\|_{L^p(\mathbb R^N)}
=0
\]
for all $p\in\left[\frac{2(N-1)}{N+2\nu+4n},\frac{N}{\mu-\nu}\right)$. 
\end{theorem}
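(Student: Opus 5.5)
The plan is to expand the kernel \eqref{radialkerneldef} in the variable $|y|$ for fixed $y$ as $t\to\infty$, and to check that $A_n$ is exactly the Taylor polynomial of order $n$ in $|y|^2$ of that expansion. Write $z=\frac{|x||y|}{2t}$ and $w=\frac{|y|^2}{4t}$. Then
\[
K(x,y,t)=\frac1{2t}(|x||y|)^{-\mu}\Bigl(\frac{z}{2}\Bigr)^{\nu}e^{-\frac{|x|^2}{4t}}\,\Phi(z,w),\qquad \Phi(z,w):=e^{-w}\Bigl(\frac z2\Bigr)^{-\nu}I_\nu(z).
\]
Here $\Phi$ is entire in $(z^2,w)$, with
\[
\Phi=\sum_{m,k\ge0}\frac{(-1)^k (z/2)^{2m}w^k}{m!\,k!\,\Gamma(m+\nu+1)}.
\]
Both $z^2$ and $w$ carry one factor $|y|^2$. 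Collecting the terms with $m+k=j$ gives the factor $|y|^{\nu-\mu+2j}|x|^{\nu-\mu+2m}t^{-1-\nu-j-m}$ and the coefficient of $P_j$. So integrating the $j$-th homogeneous piece against $u_0$ gives exactly $\mathcal P_j(x,t)$ from \eqref{pndef}; this is a routine bookkeeping check that I will do first. Consequently
\[
u-A_n=\int_{\mathbb R^N}\frac1{2t}(|x||y|)^{-\mu}\Bigl(\frac z2\Bigr)^{\nu}e^{-\frac{|x|^2}{4t}}\,R_n(z,w)\,u_0(y)\,dy,
\]
where $R_n$ is $\Phi$ minus its terms with $m+k\le n$.

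Fix $\varepsilon\in(0,1)$ and split $u_0=u_0\mathbf 1_{|y|<\varepsilon\sqrt t}+u_0\mathbf 1_{|y|\ge\varepsilon\sqrt t}$.

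\emph{Inner region.} I will use a remainder estimate of the form
\[
|R_n(z,w)|\le C\,(z^2+w)^{n+1}e^{z},
\]
which follows from the power series with nonnegative majorant together with $(z/2)^{-\nu}I_\nu(z)\le Ce^z$. Substitute $x=\sqrt t\,\xi$. Then $z^2+w\lesssim \frac{|y|^2}{t}(1+|\xi|^2)$, and for $|y|\le\sqrt t$ we have $e^{z}e^{-|\xi|^2/4}\le e^{|\xi|}e^{-|\xi|^2/4}$. Hence the $x$-integrand is bounded by
\[
t^{-1-\nu-n-1}\,|y|^{\nu-\mu+2n+2}\,|x|^{\nu-\mu}\,g(x/\sqrt t),
\]
with $g$ a fixed Gaussian-type function. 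Its $L^p$ norm in $x$ scales like $t^{-\gamma_p-n-1}$, and it is finite near the origin precisely because $p<\frac N{\mu-\nu}$. Since $|y|^2\le\varepsilon^2 t$ on this region, the inner contribution is at most
\[
C\varepsilon^2\,t^{-\gamma_p-n}\,\|u_0\|_{L^1_{\nu,n}}.
\]

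\emph{Outer region.} I will not expand here; I bound $S_\lambda(t)(u_0\mathbf 1_{|y|\ge\varepsilon\sqrt t})$ and each $\mathcal P_j$ built from the tail moment separately.

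For the $\mathcal P_j$: $\|\mathcal P_j(\cdot,t)\|_p\simeq t^{-\gamma_p-j}$ times the tail moment $\int_{|y|\ge\varepsilon\sqrt t}|y|^{\nu-\mu+2j}|u_0|$. Using $t^{n-j}\le \varepsilon^{-2(n-j)}|y|^{2(n-j)}$, we get
\[
t^{\gamma_p+n}\|\mathcal P_j\|_p\le C_\varepsilon\int_{|y|\ge\varepsilon\sqrt t}|y|^{\nu-\mu+2n}|u_0|\longrightarrow0 .
\]

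For the solution: use $I_\nu(z)\le C z^\nu e^z$ for all $z>0$, valid since $\nu\ge0>-\tfrac12$. This gives
\[
K(x,y,t)\le C\,t^{-1-\nu}\,(|x||y|)^{\nu-\mu}\,e^{-\frac{(|x|-|y|)^2}{4t}}.
\]
Then I need $\bigl\||x|^{\nu-\mu}e^{-(|x|-|y|)^2/4t}\bigr\|_{L^p}$ for $|y|\ge\varepsilon\sqrt t$. The ball $|x|\lesssim\sqrt t$ contributes $t^{\frac N{2p}+\frac{\nu-\mu}2}$, again using $p<\frac N{\mu-\nu}$. The shell of radius $\sim|y|$ and width $\sqrt t$ contributes about $|y|^{\nu-\mu+\frac{N-1}p}t^{\frac1{2p}}$. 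Multiplying by $t^{\gamma_p+n-1-\nu}|y|^{\nu-\mu}$ and trading all surplus powers of $t$ for powers of $|y|$ via $t\le\varepsilon^{-2}|y|^2$, I expect to dominate the integrand by $C_\varepsilon|y|^{\nu-\mu}(1+|y|^{2n})|u_0(y)|$.

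\emph{Main obstacle.} This domination step is where the lower endpoint $p\ge\frac{2(N-1)}{N+2\nu+4n}$ should enter. It is the bookkeeping I expect to be hardest: the extra factor $|y|^{(N-1)/p}$ has to be absorbed by the weight $|y|^{2n}$ together with the spare negative powers of $t$, and I would carry out this exponent count carefully first. Once the integrand is dominated by an integrable function, the integral over $|y|\ge\varepsilon\sqrt t$ tends to $0$ by dominated convergence.

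Combining the two regions gives
\[
\limsup_{t\to\infty}t^{\gamma_p+n}\|u-A_n\|_p\le C\varepsilon^2,
\]
and letting $\varepsilon\to0$ concludes the proof.
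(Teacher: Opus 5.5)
Your identification of $A_n$ with the $m+k\le n$ part of the double series is correct. So are the inner-region estimate and the treatment of the tail moments in $\mathcal P_j$. The gap is in the outer-region bound for $S_\lambda(t)(u_0\mathbf 1_{\{|y|\ge\varepsilon\sqrt t\}})$, and it is more than unfinished bookkeeping. With the global bound $I_\nu(z)\le Cz^\nu e^z$, the domination you expect is false on part of the stated range. If you carry out your own exponent count, the shell part of the integrand is
\[
t^{\gamma_p+n-1-\nu+\frac1{2p}}\,|y|^{2(\nu-\mu)+\frac{N-1}{p}}|u_0(y)|
=\Bigl(\frac{|y|^2}{t}\Bigr)^{\kappa}|y|^{\nu-\mu+2n}|u_0(y)|,
\qquad
\kappa:=\frac{N-1}{2p}-\frac{\mu-\nu}{2}-n .
\]
Trading $t$ for $|y|^2$ therefore works only if $\kappa\le0$, i.e.\ $p\ge\frac{2(N-1)}{N-2-2\nu+4n}$. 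For smaller $p$ the factor $(|y|^2/t)^\kappa$ is unbounded on $\{|y|\ge\varepsilon\sqrt t\}$. The majorant is then not integrable for general $u_0\in L^1_{\nu,n}$: take $|u_0(y)|\sim|y|^{-(\nu-\mu+2n)-N-\delta}$ at infinity with $0<\delta<2\kappa$. So your argument covers only $p\ge\frac{2(N-1)}{N-2-2\nu+4n}$, not the claimed $p\ge\frac{2(N-1)}{N+2\nu+4n}$. For $n=0$ the loss is severe: when $\lambda=0$ no $p$ is admissible at all, and when $\lambda=\lambda_*$ you miss $p=2$.

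The loss comes from using $z^\nu e^z$ for large $z$, where in fact $I_\nu(z)\le Cz^{-1/2}e^z$; you discard a factor $z^{-\nu-1/2}$. The repair is to split the outer region further into $\{z\le1\}$ and $\{z\ge1\}$, as in Step 4 of the proof of Proposition~\ref{asymptoticbound}.

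On $\{z\le1\}$, use $I_\nu(z)\le Cz^\nu$ and Lemma~\ref{gammaint}. After multiplying by $t^{\gamma_p+n}$ you get the majorant $Ct^n|y|^{\nu-\mu}|u_0(y)|\le C_\varepsilon|y|^{\nu-\mu+2n}|u_0(y)|$, with no restriction on $p$.

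On $\{z\ge1\}$, the kernel is bounded by $Ct^{-1/2}(|x||y|)^{-\frac{N-1}{2}}e^{-\frac{(|x|-|y|)^2}{4t}}$. By Lemma~\ref{localization}, its $L^p_x$ norm is at most $C_\varepsilon t^{\frac1{2p}}|y|^{-\frac{N-1}{2}+\frac{N-1}{p}}$ for $|y|\ge\varepsilon\sqrt t$. The resulting power of $t$ is $\frac{\mu+\nu+1}{2}+n-\frac{N-1}{2p}$, which is nonnegative exactly when $p\ge\frac{2(N-1)}{N+2\nu+4n}$. In that range the integrand is dominated by $C_\varepsilon|y|^{\nu-\mu+2n}|u_0(y)|$, and your dominated-convergence step closes.

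With this fix your route is a genuine alternative to the paper's. The paper first proves an $O(t^{-\gamma_p-n-1})$ bound under the stronger hypothesis $u_0\in L^1_{\nu,n+1}$. It then reaches $L^1_{\nu,n}$ data by density, using moment-matching bumps (Lemma~\ref{bumpfunctions}) and the vanishing-moment estimate (Corollary~\ref{vanishingmomentestimate}). Your $\varepsilon\sqrt t$ splitting treats $L^1_{\nu,n}$ data directly and needs neither.
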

\begin{remark}\label{sharpness}
The decay rate \(\gamma_p+n\) is sharp under the assumption \(u_0\in L^1_{\nu,n}(\mathbb R^N)\), in the sense that one cannot in general
obtain a faster decay for the remainder after subtracting \(A_n\) without
imposing additional assummptions.
\end{remark}
\begin{remark}
The range of admissible exponents \(p\) in the above result is not global. The upper bound on \(p\) comes from the integrability of the asymptotic profiles near the singularity \(x=0\), whereas the lower bound arises in the estimates of the large-\(z=\frac{|x||y|}{2t}\) part of the heat semigroup. However, for \(n\) sufficiently large, namely $
n\geq\frac{N-2-2\nu}{4}, $
then $
\frac{2(N-1)}{N+2\nu+4n}\leq 1. $
Consequently, this lower bound becomes irrelevant for \(1\le p<\infty\).
\end{remark}
\begin{remark}[The classical heat equation]
If \(\lambda=0\), then \(\nu=\mu\).
The leading $n=0$ profile becomes 
\[
A(x,t)
=
c_\mu M(u_0)t^{-\frac N2}
\exp\left(-\frac{|x|^2}{4t}\right),
\]
where $\displaystyle M(u_0)
=
\int_{\mathbb R^N}u_0(|x|)\,dx. $
Since $
c_\mu
=
\frac{1}{2^{2\mu+1}\Gamma(\mu+1)}
=
\frac{1}{2^{N-1}\Gamma(N/2)}, $
this agrees with the usual Gaussian heat profile, up to the normalization
convention for the radial kernel. Thus Theorem~\ref{asymptoticprofilethm}
recovers the classical asymptotic behaviour
\[
\lim_{t\to\infty}
t^{\frac N2\left(1-\frac1p\right)}
\left\|
u(\cdot,t)-M(u_0)G(\cdot,t)
\right\|_{L^p(\mathbb R^N)}
=0.
\]
\end{remark}
\begin{remark}
For $n=0$, $\lambda=\dfrac{(N-2)^2}{4}$, we recover, in the radial setting  the \(L^2\)-asymptotic behaviour obtained by Vasquez and Zuazua in \cite[Theorem 10.3]{VZ}. Indeed, the asymptotic profile in Theorem \ref{asymptoticprofilethm} becomes
\[
A(x,t)=
\frac12 M_0(u_0)t^{-1}|x|^{-\mu}
\exp\left(-\frac{|x|^2}{4t}\right),
\qquad
M_0(u_0)
=
\int_{\mathbb R^N}|x|^{-\mu}u_0(|x|)\,dx.
\]
Moreover,
\[
\gamma_p=\frac{N+2}{4}-\frac{N}{2p}.
\]
In particular, for \(p=2\), $
\gamma_2=\frac12, $
and therefore
\[
\lim_{t\to\infty}
t^{1/2}
\left\|
u(\cdot,t)
-
A(x,t)\right\|_{L^2(\mathbb R^N)}
=0.
\]
\end{remark}

For non-radial data, the situation is more delicate. Since the potential
\(|x|^{-2}\) is radial, the operator can be decomposed by means of spherical
harmonics (see, for example, \cite{ABR}). Each angular term leads to a Bessel-type radial operator with a
different parameter
\[
    \nu_\ell
    =
    \sqrt{(\ell+\mu)^2-\lambda},
    \qquad \ell\in\mathbb N_0.
\]
Thus, if \(x=r\omega\) and \(y=\rho\eta\), the full kernel for \eqref{problem} is given by
\begin{equation}\label{nonradialheatkerneldef}
K(x,y,t)=
\frac{1}{2t}(r\rho)^{-\mu}
\exp\!\left(-\frac{r^2+\rho^2}{4t}\right)
\sum_{\ell=0}^{\infty}  
I_{\nu_\ell}\left(\frac{r\rho}{2t}\right)
\sum_{m=1}^{d_\ell}Y_{\ell,m}(\omega)Y_{\ell,m}(\eta), 
\end{equation}
where \(d_\ell\) denotes the dimension of the eigenspace of
\(-\Delta_{\mathbb S^{N-1}}\) associated with the eigenvalue
\(\ell(\ell+N-2)\) and \(Y_{\ell,m}\) are the associated eigenfunctions. The complete derivation of $K(x,y,t)$ can be found in the Appendix, Proposition \ref{nonradialkernel}.

The spherical harmonic decomposition also allows us to extend the asymptotic
profile result beyond the radial class. Since each angular term evolves according
to a radial Hardy heat equation with parameter
\[
    \nu_\ell=\sqrt{(\ell+\mu)^2-\lambda},
\]
the radial asymptotic expansion can be applied term by term. This gives a
non-radial profile obtained by summing the corresponding angular profiles. 

We state the asymptotic result for initial data with finite angular expansion, where no summability difficulty occurs. We then give an infinite-dimensional version under
an additional summability assumption on the spherical harmonic coefficients.

\begin{theorem}\label{nonradial-finite}
Let \(x=r\omega\), with \(r>0\) and \(\omega\in\mathbb S^{N-1}\). Assume that
\(u_0\) has a finite spherical harmonic expansion
\[
u_0(r,\omega)
=
\sum_{\ell=0}^{L}
\sum_{m=1}^{d_\ell}
u_{0,\ell,m}(r)Y_{\ell,m}(\omega),
\]
where, for each \(\ell=0,\dots,L\) and \(m=1,\dots,d_\ell\),
\[
\int_{\mathbb R^N}
|x|^{\nu_\ell-\mu}
\bigl(1+|x|^{2n}\bigr)
|u_{0,\ell,m}(|x|)|\,dx<\infty.
\]
Let \(u(\cdot,t)=S_\lambda(t)u_0\) be the mild solution given by the kernel \eqref{nonradialheatkerneldef}. Then, for every $p\in\left[\frac{2(N-1)}{N+2\nu+4n},\frac{N}{\mu-\nu}\right)$
one has
\[
\lim_{t\to\infty}
t^{\gamma_p+n}
\left\|
u(\cdot,t)-A_{n,L}(\cdot,t)
\right\|_{L^p(\mathbb R^N)}
=0,
\]
where

\[
A_{n,L}(x,t)
=
\sum_{\ell=0}^{L}
\sum_{m=1}^{d_\ell}
\sum_{j=0}^{n}
\mathcal P_{\ell,m,j}(x,t)Y_{\ell,m}(\omega),
\]
with
\[
\mathcal P_{\ell,m,j}(x,t)
=
M_{\nu_\ell,j}^{\ell,m}
t^{-1-\nu_\ell-j}
|x|^{\nu_\ell-\mu}
e^{-\frac{|x|^2}{4t}}
P_j^{(\nu_\ell)}
\left(\frac{|x|^2}{t}\right),
\]
and
\[
M_{\nu_\ell,j}^{\ell,m}
=
\int_{\R^N}
|x|^{\nu_\ell-\mu+2j}
u_{0,\ell,m}(|x|)dx.
\]
where \(P_j^{(\alpha)}\) is defined by
\[
P_j^{(\alpha)}(s)
=
\sum_{q=0}^{j}
\frac{(-1)^{j-q}}
{2^{2(\alpha+j+q)+1}q!(j-q)!\Gamma(q+\alpha+1)}
s^q.
\]
\end{theorem}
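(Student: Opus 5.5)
The plan is to reduce Theorem~\ref{nonradial-finite} to finitely many applications of the radial argument of Theorem~\ref{asymptoticprofilethm}, one for each angular mode \((\ell,m)\) with \(\ell\le L\).

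\textbf{Step 1: the solution is a finite sum of separated modes.} Insert \(u_0=\sum_{\ell\le L}\sum_m u_{0,\ell,m}(\rho)Y_{\ell,m}(\eta)\) into the kernel \eqref{nonradialheatkerneldef} and integrate in \(\eta\in\mathbb S^{N-1}\). Orthonormality of the \(Y_{\ell,m}\) kills every term of the \(\ell\)-series except the modes present in \(u_0\). This gives
\[
u(r\omega,t)=\sum_{\ell=0}^{L}\sum_{m=1}^{d_\ell}u_{\ell,m}(r,t)\,Y_{\ell,m}(\omega),
\]
with
\[
u_{\ell,m}(r,t)=\int_0^\infty \frac{1}{2t}(r\rho)^{-\mu}e^{-\frac{r^2+\rho^2}{4t}}I_{\nu_\ell}\!\left(\frac{r\rho}{2t}\right)u_{0,\ell,m}(\rho)\rho^{N-1}\dd\rho .
\]
Up to the normalising constant \(|\mathbb S^{N-1}|\), which must be tracked consistently with the definition of \(M^{\ell,m}_{\nu_\ell,j}\), this is exactly the radial semigroup of Theorem~\ref{asymptoticprofilethm} with \(\nu\) replaced by \(\nu_\ell\). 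Exchanging the \(\ell\)-sum and the integral is harmless here: I would use Fubini, justified by the absolute convergence of the kernel series for fixed \(r,\rho,t>0\).

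\textbf{Step 2: apply the radial theorem mode by mode.} Since \(Y_{\ell,m}\) is bounded on the sphere, the \(L^p(\mathbb R^N)\) norm of \((u_{\ell,m}-A^{\ell,m}_n)Y_{\ell,m}\) is at most \(C\|u_{\ell,m}-A^{\ell,m}_n\|_{L^p(\mathbb R^N)}\), where the right side treats the difference as a radial function. The hypothesis \(u_{0,\ell,m}\in L^1_{\nu_\ell,n}\) is exactly the hypothesis of Theorem~\ref{asymptoticprofilethm} with parameter \(\nu_\ell\), and the profile \(\sum_j\mathcal P_{\ell,m,j}\) is the radial profile \(A_n\) with \(\nu\to\nu_\ell\). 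The radial theorem therefore gives
\[
t^{\gamma_{p,\ell}+n}\|u_{\ell,m}-A^{\ell,m}_n\|_{L^p}\to0
\]
for \(p\) in the interval \(\bigl[\tfrac{2(N-1)}{N+2\nu_\ell+4n},\tfrac{N}{\mu-\nu_\ell}\bigr)\).

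\textbf{Step 3: compare exponents and ranges.} Since \(\nu_\ell\ge\nu\), we have \(\gamma_{p,\ell}\ge\gamma_p\). Hence \(t^{\gamma_p+n}\le t^{\gamma_{p,\ell}+n}\) for \(t\ge1\), and each mode's remainder is \(o(t^{-\gamma_p-n})\). The \(\ell\)-dependent \(p\)-interval contains the \(\ell=0\) interval: the lower endpoint decreases in \(\nu_\ell\), and the upper endpoint \(N/(\mu-\nu_\ell)\) increases, becoming \(+\infty\) once \(\nu_\ell\ge\mu\). So every \(p\) in the stated range is admissible for every mode. Summing the finitely many modes with the triangle inequality finishes the proof.

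\textbf{Main obstacle.} The only genuine subtlety is that Theorem~\ref{asymptoticprofilethm} is stated under the standing assumption \(\nu\le\mu\) (since \(\lambda\ge0\)), whereas \(\nu_\ell\) can exceed \(\mu\). I would check that the radial proof uses the upper bound on \(p\) only to make \(|x|^{\nu-\mu}\) locally \(L^p\)-integrable, and that it does not otherwise use \(\nu\le\mu\). If it does, I would re-run the key estimates for general \(\nu\ge0\): the Taylor expansion of \(I_\nu\) with integral remainder for small \(z=\frac{|x||y|}{2t}\), and the large-\(z\) bound \(I_\nu(z)\lesssim z^{-1/2}e^{z}\). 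For \(\nu_\ell>\mu\) the profile is more regular at the origin, so no new difficulty is expected.
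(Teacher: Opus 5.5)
Your argument is the paper's argument: reduce to the finitely many angular modes, identify each mode as a radial Hardy semigroup with $\nu$ replaced by $\nu_\ell$, invoke Theorem~\ref{asymptoticprofilethm} mode by mode, use $\gamma_{p,\ell}\ge\gamma_p$, and sum. Your Step~3 is more careful than the paper. The paper checks neither that the admissible $p$-interval for $\nu_\ell$ contains the one for $\nu$, nor what happens when $\nu_\ell>\mu$. Reading the upper endpoint as the integrability condition $p(\nu_\ell-\mu)+N>0$ is the right fix, and nothing else in the radial proof uses $\nu\le\mu$.

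The normalization you deferred, however, does not cancel, and tracking it changes the conclusion. Take $Y_{\ell,m}$ orthonormal in $L^2(\mathbb S^{N-1})$, which is the paper's convention, and the kernel \eqref{nonradialheatkerneldef}. Your Step~1 formula for $u_{\ell,m}$ integrates against $\rho^{N-1}\,d\rho$. The semigroup in Theorem~\ref{asymptoticprofilethm} instead integrates the kernel \eqref{radialkerneldef} over $\R^N$, i.e.\ against $|\mathbb S^{N-1}|\rho^{N-1}\,d\rho$. Hence $u_{\ell,m}(\cdot,t)=|\mathbb S^{N-1}|^{-1}S^{(\nu_\ell)}(t)u_{0,\ell,m}$, where $S^{(\nu_\ell)}$ is that radial semigroup with parameter $\nu_\ell$. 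Since $M^{\ell,m}_{\nu_\ell,j}$ is also an integral over $\R^N$, what the radial theorem actually gives is
\[
\lim_{t\to\infty}t^{\gamma_{p,\ell}+n}\Bigl\|u_{\ell,m}(\cdot,t)-|\mathbb S^{N-1}|^{-1}\sum_{j=0}^{n}\mathcal P_{\ell,m,j}(\cdot,t)\Bigr\|_{L^p(\R^N)}=0,
\]
not the limit claimed in your Step~2. As a consistency check, $Y_{0,1}=|\mathbb S^{N-1}|^{-1/2}$, so the $\ell=0$ part of \eqref{nonradialheatkerneldef} is $|\mathbb S^{N-1}|^{-1}$ times \eqref{radialkerneldef}. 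That is exactly the kernel of Proposition~\ref{radialkernel}.

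So your argument proves the statement with $A_{n,L}$ replaced by $|\mathbb S^{N-1}|^{-1}A_{n,L}$. Equivalently, the moments should be computed as $\int_0^\infty\rho^{\nu_\ell-\mu+2j}u_{0,\ell,m}(\rho)\rho^{N-1}\,d\rho$. As literally written, one gets $u-A_{n,L}=(|\mathbb S^{N-1}|^{-1}-1)A_{n,L}+o(t^{-\gamma_p-n})$ in $L^p$. The first term is of exact order $t^{-\gamma_p}$ whenever $M^{0,1}_{\nu,0}\neq0$, so the claimed limit fails. The paper's own proof makes the same identification without comment. This is therefore a normalization defect in the statement rather than in your strategy, but it is precisely the step your proposal passes over.
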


\begin{theorem}\label{nonradial-infinite} Let \(x=r\omega\), with \(r>0\) and \(\omega\in\mathbb S^{N-1}\).
Assume that \(u_0\) admits the spherical harmonic expansion
\[
u_0(r,\omega)
=
\sum_{\ell=0}^{\infty}
\sum_{m=1}^{d_\ell}
u_{0,\ell,m}(r)Y_{\ell,m}(\omega).
\]
For each \((\ell,m)\), assume that
\[
\|u_{0,\ell,m}\|_{L^1_{\nu_\ell,n+1}}
:=
\int_{\R^N}
|x|^{\nu_\ell-\mu}
\bigl(1+|x|^{2n+2}\bigr)
|u_{0,\ell,m}(|x|)|dx
<\infty.
\]
Assume moreover that
\[
\sum_{\ell=0}^{\infty}
\sum_{m=1}^{d_\ell}
\|Y_{\ell,m}\|_{L^p(\mathbb S^{N-1})}
C_{\ell,n,p}
\|u_{0,\ell,m}\|_{L^1_{\nu_\ell,n+1}}
<\infty.
\]
Let \(u(\cdot,t)=S_\lambda(t)u_0\) be the mild solution defined by the
non-radial kernel \eqref{nonradialheatkerneldef}. Then, for every $p\in\left[\frac{2(N-1)}{N+2\nu+4n+4},\frac{N}{\mu-\nu}\right)$
one has
\[
\lim_{t\to\infty}
t^{\gamma_p+n}
\left\|
u(\cdot,t)-A_n^\infty(\cdot,t)
\right\|_{L^p(\mathbb R^N)}
=0,
\]
where
\[
A_n^\infty(x,t)
=
\sum_{\ell=0}^{\infty}
\sum_{m=1}^{d_\ell}
\sum_{j=0}^{n}
\mathcal P_{\ell,m,j}(x,t)Y_{\ell,m}(\omega),
\]
with
\[
\mathcal P_{\ell,m,j}(x,t)
=
M_{\nu_\ell,j}^{\ell,m}
t^{-1-\nu_\ell-j}
|x|^{\nu_\ell-\mu}
e^{-\frac{|x|^2}{4t}}
P_j^{(\nu_\ell)}
\left(\frac{|x|^2}{t}\right),
\]
and
\[
M_{\nu_\ell,j}^{\ell,m}
=
\int_{\R^N}
|x|^{\nu_\ell-\mu+2j}
u_{0,\ell,m}(|x|)dx.
\]
where \(P_j^{(\alpha)}\) is defined by
\[
P_j^{(\alpha)}(s)
=
\sum_{q=0}^{j}
\frac{(-1)^{j-q}}
{2^{2(\alpha+j+q)+1}q!(j-q)!\Gamma(q+\alpha+1)}
s^q.
\]
\end{theorem}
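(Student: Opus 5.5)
We reduce the statement to the radial estimate of Theorem~\ref{asymptoticprofilethm}, applied mode by mode, and make every constant explicit so the series over \((\ell,m)\) can be summed by dominated convergence.

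\textbf{Step 1: mode decomposition.} Orthogonality of the \(Y_{\ell,m}\) in the kernel \eqref{nonradialheatkerneldef} gives
\[
u(x,t)=\sum_{\ell,m}u_{\ell,m}(r,t)Y_{\ell,m}(\omega),\qquad u_{\ell,m}(r,t)=\int_0^\infty \frac{1}{2t}(r\rho)^{-\mu}e^{-\frac{r^2+\rho^2}{4t}}I_{\nu_\ell}\Bigl(\frac{r\rho}{2t}\Bigr)u_{0,\ell,m}(\rho)\rho^{N-1}\dd\rho .
\]
So \(u_{\ell,m}\) is the radial Hardy solution with \(\nu\) replaced by \(\nu_\ell\). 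Interchanging sum and integral is justified by the summability hypothesis together with the pointwise bound \(I_{\nu_\ell}(z)\le C z^{\nu_\ell}e^{z}\) (up to constants in \(\ell\)). Since \(\|f(r)Y(\omega)\|_{L^p(\R^N)}=\|f\|_{L^p(r^{N-1}dr)}\|Y\|_{L^p(\mathbb S^{N-1})}\), the triangle inequality gives
\[
t^{\gamma_p+n}\|u-A_n^\infty\|_{L^p}\le\sum_{\ell,m}\|Y_{\ell,m}\|_{L^p(\mathbb S^{N-1})}\,t^{\gamma_p+n}\Bigl\|u_{\ell,m}(\cdot,t)-\sum_{j\le n}\mathcal P_{\ell,m,j}(\cdot,t)\Bigr\|_{L^p}.
\]

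\textbf{Step 2: a uniform quantitative radial bound (main obstacle).} We redo the proof of Theorem~\ref{asymptoticprofilethm} with one extra order and with explicit constants. Write \(I_{\nu_\ell}(z)=\sum_{k\le n}c_kz^{\nu_\ell+2k}+R_{n}(z)\). For \(z\le1\) the remainder satisfies \(|R_n(z)|\le C z^{\nu_\ell+2n+2}\), and for \(z\ge1\) it is controlled by \(z^{-1/2}e^z\) plus the polynomial part. We split the \(y\)-integral according to \(z=r\rho/2t\le1\) or \(z>1\).

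\emph{Small \(z\).} Using the moment of order \(2n+2\) (this is why \(L^1_{\nu_\ell,n+1}\) is assumed), we get a bound
\[
C_{\ell,n,p}\,\|u_{0,\ell,m}\|_{L^1_{\nu_\ell,n+1}}\;t^{-\gamma_{p,\ell}-n-1}.
\]

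\emph{Large \(z\).} The region \(\rho\ge 2t/r\) forces extra decay by Chebyshev. Gaussian integrability in \(r\) there is what requires \(p\ge 2(N-1)/(N+2\nu+4n+4)\). This shifts \(n\) to \(n+1\) compared with Theorem~\ref{asymptoticprofilethm}, and \(\nu\le\nu_\ell\) makes the condition uniform in \(\ell\).

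\emph{The profiles.} Each \(\mathcal P_{\ell,m,j}\) lies in \(L^p\) near \(0\) iff \(p(\mu-\nu_\ell)<N\). This holds for all \(\ell\) once \(p<N/(\mu-\nu)\), because \(\nu_\ell\ge\nu\).

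Here \(C_{\ell,n,p}\) is precisely the constant appearing in the hypothesis. The key point is to track its \(\ell\)-dependence, which enters through \(1/\Gamma(\nu_\ell+k+1)\), the powers \(2^{-2\nu_\ell}\), and the \(L^p\) norms of \(r^{\nu_\ell-\mu+2k}e^{-r^2/4}\). We must make sure that nothing else grows in \(\ell\). In particular, the large-\(z\) bound on \(I_{\nu_\ell}\) should be taken uniform in \(\nu_\ell\), using \(I_{\nu}(z)\le I_0(z)\le Ce^z z^{-1/2}\) for \(\nu\ge0\) and \(z\ge1\).

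\textbf{Step 3: conclusion.} Since \(\gamma_{p,\ell}\ge\gamma_p\), Step 2 gives each term in Step 1 a bound
\[
\|Y_{\ell,m}\|_{L^p}\,C_{\ell,n,p}\,\|u_{0,\ell,m}\|_{L^1_{\nu_\ell,n+1}}\;t^{\gamma_p-\gamma_{p,\ell}-1}\le(\text{summable})\cdot t^{-1}
\]
for \(t\ge1\). The sum is finite by hypothesis, so the right-hand side of Step 1 is \(O(t^{-1})\to0\), which proves the claim. Alternatively, if one prefers to avoid the extra order, the same domination argument applies to the \(o(1)\) per-mode limits from Theorem~\ref{nonradial-finite}, combined with the uniform bound to dominate the tail.
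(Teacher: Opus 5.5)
Your argument is correct and has the same skeleton as the paper's proof. Both reduce to the modes $u_{\ell,m}$, use $\|f(r)Y(\omega)\|_{L^p(\R^N)}=\|f\|_{L^p}\|Y\|_{L^p(\mathbb S^{N-1})}$ with Minkowski, apply the radial estimate with $\nu$ replaced by $\nu_\ell$ (whose admissible $p$-range contains the stated one because $\nu_\ell\ge\nu$), use $\gamma_{p,\ell}\ge\gamma_p$, and sum via the hypothesis.

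You differ only in the last step. The paper takes the per-mode limits $t^{\gamma_p+n}\|R_{\ell,m}(\cdot,t)\|_{L^p}\to0$ from Theorem~\ref{asymptoticprofilethm}. It uses Proposition~\ref{asymptoticbound} only in its stated, non-decaying form $t^{\gamma_p+n}\|R_{\ell,m}(\cdot,t)\|_{L^p}\le C_{\ell,n,p}\|u_{0,\ell,m}\|_{L^1_{\nu_\ell,n+1}}$, as a dominating sequence, and concludes by dominated convergence for series; that is your fallback. Your main route instead uses $\|R_{\ell,m}(\cdot,t)\|_{L^p}\le C_{\ell,n,p}\|u_{0,\ell,m}\|_{L^1_{\nu_\ell,n+1}}\,t^{-\gamma_{p,\ell}-n-1}$ for $t\ge1$. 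This is what the proof of Proposition~\ref{asymptoticbound} actually establishes, and the paper itself uses it inside the proof of Theorem~\ref{asymptoticprofilethm}. It bounds the whole weighted remainder by a convergent series times $t^{-1}$. You therefore get an explicit $O(t^{-1})$ rate and need neither Theorem~\ref{asymptoticprofilethm} with its density argument nor dominated convergence. Since the hypothesis already involves the $(n+1)$-st moments, yours is the more economical route; the paper's would still work with a merely bounded domination.

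Your Step~2 need not be redone: it is exactly Proposition~\ref{asymptoticbound} with $\nu\to\nu_\ell$. Likewise, tracking the $\ell$-dependence of $C_{\ell,n,p}$ is not part of the proof. That constant is by definition the one from the mode-$\ell$ estimate, and its summability is assumed.

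One caveat, which you share with the paper, concerns normalization. Take the $Y_{\ell,m}$ orthonormal in $L^2(\mathbb S^{N-1},\dd\sigma)$, as in the Appendix and the Parseval remark. Then your
\[
u_{\ell,m}(r,t)=\int_0^\infty\frac1{2t}(r\rho)^{-\mu}e^{-\frac{r^2+\rho^2}{4t}}I_{\nu_\ell}\Bigl(\frac{r\rho}{2t}\Bigr)u_{0,\ell,m}(\rho)\rho^{N-1}\dd\rho
\]
equals $|\mathbb S^{N-1}|^{-1}\int_{\R^N}K(x,y,t)\,u_{0,\ell,m}(|y|)\dd y$, with $K$ from \eqref{radialkerneldef} at parameter $\nu_\ell$. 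However, $\sum_{j\le n}\mathcal P_{\ell,m,j}$, whose moments are integrals over $\R^N$, is the profile of the latter quantity. So identifying $u_{\ell,m}$ with the radial solution of Theorem~\ref{asymptoticprofilethm} is off by the factor $|\mathbb S^{N-1}|$. As stated, the $\ell=0$ mode alone leaves a remainder of exact order $t^{-\gamma_p}$ whenever $M^{0,1}_{\nu,0}\neq0$. Dividing the moments $M^{\ell,m}_{\nu_\ell,j}$ by $|\mathbb S^{N-1}|$ fixes this, and your argument then goes through verbatim. The paper's proofs of Theorems~\ref{nonradial-finite} and~\ref{nonradial-infinite} make the same identification, so this is a normalization slip in the setup, not a flaw in your strategy.
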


\begin{remark}
In the case \(p=2\), the summability assumption in Theorem \ref{nonradial-infinite} can be weakened because the spherical harmonics are orthonormal in \(L^2(\mathbb S^{N-1})\). Indeed, if
\[
R_{\ell,m}(r,t)
=
u_{\ell,m}(r,t)
-
\sum_{j=0}^{n}\mathcal P_{\ell,m,j}(r,t),
\]
then Parseval's identity gives
\[
\left\|
\sum_{\ell=0}^{\infty}
\sum_{m=1}^{d_\ell}
R_{\ell,m}(r,t)Y_{\ell,m}(\omega)
\right\|_{L^2(\mathbb R^N)}^2
=
\sum_{\ell=0}^{\infty}
\sum_{m=1}^{d_\ell}
\|R_{\ell,m}(\cdot,t)\|_{L^2(\R^N)}^2.
\]
Therefore, instead of the absolute summability condition used for general \(p\),
it is enough to assume the square-summability condition
\[
\sum_{\ell=0}^{\infty}
\sum_{m=1}^{d_\ell}
C_{\ell,n,2}^2
\|u_{0,\ell,m}\|_{1,\nu_\ell,n+1}^2
<\infty.
\]
Note that the lower bound on $p$ is always smaller than $2$.
\end{remark}
\section{Asymptotic Profile of solutions}
\begin{lemma}\label{lemma0.2}
Let \(1\le p<\frac{N}{\mu-\nu}\).
Then, for every $j=0,1,\dots n$,
\[
\|\mathcal P_j(\cdot,t)\|_{L^p(\mathbb R^N)}
=
C_{j,p,\nu}|M_{\nu,j}(u_0)|t^{-\gamma_p-j},
\]
where
\[
C_{j,p,\nu}
:=
\left(
\int_{\mathbb R^N}
|z|^{p(\nu-\mu)}
e^{-\frac{p|z|^2}{4}}
\left|P_j(|z|^2)\right|^p
\,dz
\right)^{1/p}
>0.
\]
Consequently, \(A_n(\cdot,t)\in L^p(\mathbb R^N)\) and $\|A_n(\cdot,t)\|_{L^p(\R^N)}\leq C\cdot M_{\nu,j}(|u_0|)t^{-\gamma_p-n}$.
\end{lemma}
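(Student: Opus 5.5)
The plan is a direct scaling computation in the self-similar variable $z=x/\sqrt t$, followed by a check that the resulting constant is finite and positive. From \eqref{pndef} we have
\[
|\mathcal P_j(x,t)|^p=|M_{\nu,j}(u_0)|^p\,t^{-p(1+\nu+j)}|x|^{p(\nu-\mu)}e^{-\frac{p|x|^2}{4t}}\left|P_j\!\left(\tfrac{|x|^2}{t}\right)\right|^p .
\]
Substituting $x=\sqrt t\,z$, so that $dx=t^{N/2}dz$ and $|x|^{p(\nu-\mu)}=t^{\frac p2(\nu-\mu)}|z|^{p(\nu-\mu)}$, we get
\[
\|\mathcal P_j(\cdot,t)\|_{L^p}^p=|M_{\nu,j}(u_0)|^p\,C_{j,p,\nu}^p\;t^{-p(1+\nu+j)+\frac p2(\nu-\mu)+\frac N2}.
\]
Dividing the exponent by $p$ gives
\[
-1-\nu-j+\frac{\nu-\mu}{2}+\frac{N}{2p}=-j-\left[\frac N2\Bigl(1-\frac1p\Bigr)-\frac{\mu-\nu}{2}\right]=-\gamma_p-j,
\]
using $1+\mu=N/2$, so that $1+\nu-\frac{\nu-\mu}{2}=\frac N2-\frac{\mu-\nu}{2}$. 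This is the claimed identity, with $\gamma_p$ as in \eqref{gammapdef}.

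The main step is showing $0<C_{j,p,\nu}<\infty$. For finiteness, the integrand is radial, so in polar coordinates it is $\int_0^\infty r^{p(\nu-\mu)+N-1}e^{-pr^2/4}|P_j(r^2)|^p\,dr$. At infinity the Gaussian dominates the polynomial factor. Near $0$, $P_j(r^2)$ is bounded, so we need $p(\nu-\mu)+N>0$, i.e.\ $p<\frac{N}{\mu-\nu}$, which is exactly the hypothesis (when $\nu=\mu$ there is no restriction). For positivity, $P_j$ is a nonzero polynomial: its top coefficient $\frac{1}{2^{2(\nu+2j)+1}j!\,\Gamma(j+\nu+1)}$ is nonzero. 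Hence $P_j(|z|^2)$ vanishes only on a null set, and the integral is strictly positive.

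For the consequence, we use Minkowski's inequality:
\[
\|A_n(\cdot,t)\|_{L^p}\le\sum_{j=0}^nC_{j,p,\nu}|M_{\nu,j}(u_0)|\,t^{-\gamma_p-j}.
\]
Each term is finite because $|M_{\nu,j}(u_0)|\le M_{\nu,j}(|u_0|)\le\|u_0\|_{L^1_{\nu,n}}$, since $|x|^{2j}\le 1+|x|^{2n}$ for $j\le n$. This gives $A_n(\cdot,t)\in L^p$. One caveat concerns the stated bound. For $t\ge1$ the slowest-decaying term is $j=0$, so the natural bound is $C\max_j M_{\nu,j}(|u_0|)\,t^{-\gamma_p}$. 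A factor $t^{-\gamma_p-n}$ is only attainable for $t\le1$, or as the bound for the top term $\mathcal P_n$ alone. I would therefore state the estimate as $\|A_n(\cdot,t)\|_{L^p}\le C\sum_j M_{\nu,j}(|u_0|)\,t^{-\gamma_p-j}$, which is what the computation actually yields.
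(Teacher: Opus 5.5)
Your proof is correct and is essentially the paper's own argument: the substitution $x=\sqrt t\,z$, integrability at the origin from $p(\nu-\mu)+N>0$, positivity because $P_j\not\equiv 0$, and summation over $j$ using $|M_{\nu,j}(u_0)|\le M_{\nu,j}(|u_0|)$. Your caveat about the final claim is also justified: the summation only gives $\|A_n(\cdot,t)\|_{L^p}\le\sum_{j=0}^{n}C_{j,p,\nu}M_{\nu,j}(|u_0|)\,t^{-\gamma_p-j}$, and since $\|A_n(\cdot,t)\|_{L^p}\ge C_{0,p,\nu}|M_{\nu,0}(u_0)|\,t^{-\gamma_p}-O(t^{-\gamma_p-1})$, the stated rate $t^{-\gamma_p-n}$ (whose right-hand side also carries a free index $j$) holds for $t\le1$ but fails as $t\to\infty$ whenever $n\ge1$ and $M_{\nu,0}(u_0)\neq0$ --- a harmless slip, because the paper later uses only the exact identity for $\|\mathcal P_n(\cdot,t)\|_{L^p}$, in the sharpness argument.
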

\begin{proof}
By definition \eqref{pndef}.
\[
\mathcal P_j(x,t)
=
M_{\nu,j}(u_0)
t^{-1-\nu-j}
|x|^{\nu-\mu}
e^{-\frac{|x|^2}{4t}}
P_j\left(\frac{|x|^2}{t}\right).
\]
Therefore
\[
\begin{aligned}
\|\mathcal P_j(\cdot,t)\|_{L^p(\R^N)}^p
&=
|M_{\nu,j}(u_0)|^p
t^{-p(1+\nu+j)}
\int_{\mathbb R^N}
|x|^{p(\nu-\mu)}
e^{-\frac{p|x|^2}{4t}}
\left|
P_j\left(\frac{|x|^2}{t}\right)
\right|^p
\,dx.
\end{aligned}
\]
Changing variables \(x=\sqrt t\,z\), we obtain
\[
\begin{aligned}
\|\mathcal P_j(\cdot,t)\|_{L^p(\R^N)}^p
&=
|M_{\nu,j}(u_0)|^p
t^{-p(1+\nu+j)}
t^{\frac{p(\nu-\mu)}2}
t^{\frac N2}
\\
&\qquad\times
\int_{\mathbb R^N} |z|^{p(\nu-\mu)} e^{-\frac{p|z|^2}{4}}|P_j(|z|^2)|^p\,dz.
\end{aligned}
\]
The condition \(p(\nu-\mu)+N>0\) guarantees integrability near the origin and while the Gaussian term gives decay at infinity. It is strictly positive since \(P_n\) is not
identically zero. Taking the \(p\)-th root and using
\[
\gamma_p:=\frac N2\left(1-\frac1p\right)
-\frac{\mu-\nu}{2}=
1+\frac{\mu+\nu}{2}-\frac N{2p},
\]
we get
\[
\|\mathcal P_j(\cdot,t)\|_{L^p(\R^N)}
=
C_{j,p,\nu}|M_{\nu,j}(u_0)|t^{-\gamma_p-j}.
\]
Summing over $j=0,1,\dots n$ and using $|M_{\nu,j}(u_0)|\leq M_{\nu,j}(|u_0|)$ gives the final result
\end{proof}

The asymptotic profile we present arises from the small $z$ (with \(z=\frac{|x||y|}{2t}\)) expansion of the kernel \eqref{radialkerneldef} in the following way: we first expand the Bessel function \(I_\nu(z)\) and then the Gaussian factor
\(e^{-|y|^2/(4t)}\), each term is indexed by two integers \(m\) and \(\ell\).
Their total order \(j=m+\ell\) produces the weighted moment $M_{\nu,j}(u_0)$ defined in \eqref{momentsdef}
Collecting all terms with the same index \(j\) gives the profile \(\mathcal{P}_j\). The
remaining terms are the Bessel remainder, the Gaussian remainder, the cut-off
error, and the large-\(z\) contribution. We estimate each of these remainder terms separately.

\begin{prop}\label{asymptoticbound}
Let \(u_0:(0,\infty)\to\mathbb R\) be radial such that
\begin{equation}\label{momentassumption}
    \int_{\mathbb R^N}
|x|^{\nu-\mu}\bigl(1+|x|^{2n+2}\bigr)|u_0(|x|)|\,dx<\infty.
\end{equation}

and let $u(x,t)$ be the heat semigroup associated to the problem \eqref{problem} with initial data $u_0$. Then there exists a constant $C>0$ such that, for $p\in\left[\dfrac{2(N-1)}{N+4+2\nu+4n},\dfrac{N}{\mu-\nu}\right)$, we have  
\[
t^{\gamma_p+n}
\left\|
u(\cdot,t)
-
A_n(\cdot,t)
\right\|_{L^p(\mathbb R^N)}
\leq C\cdot M_{\nu,n+1}(|u_0|),\qquad t\geq 1
\]
\end{prop}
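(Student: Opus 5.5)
We will follow the decomposition described just before the statement. Write $z=\frac{|x||y|}{2t}$ and split the $y$-integral defining $u(x,t)$ with a cut-off: a region $\{z\le 1\}$, where the Bessel series is used, and its complement $\{z>1\}$. On $\{z\le1\}$ we will use the Taylor expansion with remainder
\[
I_\nu(z)=\sum_{m=0}^{n}\frac{(z/2)^{\nu+2m}}{m!\,\Gamma(m+\nu+1)}+R^{B}_{n}(z),\qquad |R^B_n(z)|\le C z^{\nu+2n+2}\quad(0<z\le1).
\]
We also expand $e^{-|y|^2/(4t)}=\sum_{\ell=0}^{n-m}\frac{(-1)^\ell |y|^{2\ell}}{\ell!(4t)^\ell}+R^G_{n-m}$, with $|R^G_{k}|\le C(|y|^2/t)^{k+1}$.

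Multiplying out, the term of index $(m,\ell)$ with $m+\ell=j\le n$ equals
\[
\frac{(-1)^\ell\, t^{-1-\nu-j}\,|x|^{\nu-\mu+2m}|y|^{\nu-\mu+2j}\,e^{-|x|^2/4t}}{2^{2(\nu+j+m)+1}\,m!\,\ell!\,\Gamma(m+\nu+1)}.
\]
Summing over $m$ and integrating over all $y\in\R^N$ gives exactly $\mathcal P_j$, with the moment $M_{\nu,j}(u_0)$ appearing. Since the expansion was only used on $\{z\le1\}$, the remainder decomposes as
\[
u-A_n=E_B+E_G+E_{\rm cut}+E_{\rm large},
\]
where:
\begin{itemize}
\item[(a)] $E_B$ is the Bessel remainder;
\item[(b)] $E_G$ collects the Gaussian remainders;
\item[(c)] $E_{\rm cut}$ consists of the polynomial terms integrated over $\{z>1\}$, which must be subtracted back;
\item[(d)] $E_{\rm large}$ is the true kernel integrated over $\{z>1\}$.
\end{itemize}

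For (a) and (b) we obtain pointwise bounds of the form
\[
C\,t^{-1-\nu-n-1}\,|x|^{\nu-\mu}\,e^{-|x|^2/4t}\,\bigl(1+|x|^2/t\bigr)^{n+1}\int|y|^{\nu-\mu}\bigl(1+|y|^{2n+2}\bigr)|u_0|\,dy .
\]
The $L^p$ norm of such a bound is computed by the scaling argument of Lemma~\ref{lemma0.2}. It gives $t^{-\gamma_p-n-1}\le t^{-\gamma_p-n}$ for $t\ge1$, using $p<N/(\mu-\nu)$ for integrability at the origin. When $t\ge1$, the factor $|y|^{2n+2}t^{-n-1}$ is controlled by $M_{\nu,n+1}(|u_0|)$, and the lower moment contributions are absorbed using $t\ge1$. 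Strictly speaking, this is a bound by the norm $\|u_0\|_{L^1_{\nu,n+1}}$, which is how the constant should be interpreted.

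For (c), on $\{z>1\}$ we have $1<z^{2}$ raised to any power. Inserting $z^{2(n+1-j)}$, each term gains $(|x||y|/t)^{2(n+1-j)}$. This reduces (c) to the same type of bound as (a) and (b), with extra polynomial factors in $|x|^2/t$ that are harmless against the Gaussian.

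The main obstacle is (d). There the Gaussian in $x$ is not directly available, and we use $I_\nu(z)\le Cz^{-1/2}e^{z}$ for $z\ge1$. This gives
\[
K\le C\,t^{-1/2}\,(|x||y|)^{-\mu-1/2}\,e^{-(|x|-|y|)^2/4t}.
\]
On $\{z>1\}$ we write $1\le z^{a}$ with $a=\nu+2n+2+\tfrac12$ chosen to produce the weight $|y|^{\nu-\mu}(1+|y|^{2n+2})$. The remaining $x$-dependence is $|x|^{\text{power}}e^{-(|x|-|y|)^2/4t}$, which is essentially a one-dimensional Gaussian in $|x|$ centered at $|y|$. Its $L^p(\R^N)$ norm involves $\int r^{N-1}\cdots dr$, where the factor $r^{N-1}$ against the negative power of $|x|$ must be integrable. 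We will bound the $L^p$ norm uniformly in $y$ and then apply Minkowski's integral inequality in $y$; the integrability requirement at this step is what produces the lower bound $p\ge\frac{2(N-1)}{N+4+2\nu+4n}$. I expect the bookkeeping of exponents here to be the delicate part. We would first try to split further into $|x|\le|y|/2$, where the Gaussian gives $e^{-c|y|^2/t}$, and $|x|\ge|y|/2$, where the powers of $|x|$ and $|y|$ are comparable, and then check that the resulting rate is $t^{-\gamma_p-n-1/2}$ or better.
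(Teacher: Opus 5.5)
Your steps (a)--(c) coincide with the paper's Steps 1--3 and are fine. Two minor points there:
\begin{itemize}
\item the $(m,\ell)$ term carries $t^{-1-\nu-j-m}$ (i.e.\ $t^{-1-\nu-j}|x|^{\nu-\mu}(|x|^2/t)^m$), not $t^{-1-\nu-j}$;
\item each remainder produces exactly the weight $|y|^{\nu-\mu+2n+2}$, so no lower moments arise. The bound really is by $M_{\nu,n+1}(|u_0|)$, and you do not need to reinterpret the constant.
\end{itemize}

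The genuine gap is (d). Inserting $1\le z^{a}$ with the fixed exponent $a=\nu+2n+\frac52$ on all of $\{z>1\}$ does turn the $y$-weight into $|y|^{\nu-\mu+2n+2}$. But it also multiplies the $x$-factor, leaving $t^{-\nu-2n-3}|x|^{\nu-\mu+2n+2}e^{-(|x|-|y|)^2/4t}$. This Gaussian is centred at $|x|=|y|$, not at the origin. Hence for $|y|\gg\sqrt t$ its $L^p_x$-norm is at least of order $t^{\frac1{2p}}|y|^{\nu-\mu+2n+2+\frac{N-1}{p}}$, and the exponent $\nu-\mu+2n+2+\frac{N-1}{p}$ is positive throughout the admissible range of $p$. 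So this norm cannot be bounded uniformly in $y$. Minkowski would then require $\int|y|^{2(\nu-\mu+2n+2)+\frac{N-1}{p}}|u_0|\,dy<\infty$, which is not assumed. Where $z\gg1$, the factor $z^a$ is far too lossy; in (c) the same trick is harmless only because the $x$-Gaussian there is centred at $0$.

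Your explanation of the lower bound on $p$ is also off. On $\{z>1\}$ one has $|x|>2t/|y|$, so there is no singularity at $x=0$. An integrability condition there would bound $p$ from above anyway, not from below.

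What is missing is a dichotomy in $|y|$ relative to $\sqrt t$, with no power of $z$ inserted in the far region. Keep
\[
|K|\mathbf 1_{\{z\ge1\}}\le Ct^{-1/2}(|x||y|)^{-\frac{N-1}{2}}e^{-(|x|-|y|)^2/4t}\mathbf 1_{\{|x||y|\ge2t\}}
\]
and apply Minkowski in $y$.

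\emph{Near region, $|y|\le\sqrt t$.} The constraint forces $|x|\ge2t/|y|\ge2\sqrt t\ge2|y|$, so $e^{-(|x|-|y|)^2/4t}\le e^{-|x|^2/16t}$. The $L^p_x$-norm is then $\le C_Mt^{-\frac{N-1}{4}+\frac N{2p}}(|y|/\sqrt t)^M$ for every $M$ (Lemma~\ref{localization}(1)). In this region your $z^a$ insertion actually works and gives $Ct^{-\gamma_p-n-1}M_{\nu,n+1}(|u_0|)$.

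\emph{Far region, $|y|\ge\sqrt t$.} The $L^p_x$-norm is $\le Ct^{\frac1{2p}}|y|^{-\frac{N-1}{2}+\frac{N-1}{p}}$, which leaves the weight $|y|^{-(N-1)+\frac{N-1}{p}}$. On $\{|y|\ge\sqrt t\}$ one dominates this weight by
\[
t^{\frac12\left(-(N-1)+\frac{N-1}{p}-(\nu-\mu+2n+2)\right)}|y|^{\nu-\mu+2n+2}.
\]
This is legitimate exactly when $-(N-1)+\frac{N-1}{p}\le\nu-\mu+2n+2$, i.e.\ $p\ge\frac{2(N-1)}{N+2\nu+4n+4}$. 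That is where the lower bound on $p$ comes from, and it again yields $t^{-\gamma_p-n-1}$.

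Your tentative split $|x|\lessgtr|y|/2$ does not by itself supply this comparison, so as proposed step (d) is unresolved.
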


\begin{proof} Set $
z=\frac{|x||y|}{2t}.$ Let \(\chi\in C_c^\infty([0,\infty))\) satisfy
\[
0\le \chi\le 1,
\qquad
\chi(s)=1 \text{ for }0\le s\le 1,
\qquad
\chi(s)=0 \text{ for }s\ge 2.
\]
We decompose
\[
u(x,t)=u_{\rm small}(x,t)+u_{\rm large}(x,t),
\]
where
\[
u_{\rm small}(x,t)
=
\int_{\mathbb R^N}
\chi(z)K(x,y,t)u_0(y)\,dy
\]
and
\[
u_{\rm large}(x,t)
=
\int_{\mathbb R^N}
(1-\chi(z))K(x,y,t)u_0(y)\,dy.
\]
The asymptotic profiles are obtained from \(u_{\rm small}\). The term
\(u_{\rm large}\) will be estimated separately and shown to be of the same
order as the remainder.

\textbf{Step 1: Estimating the Bessel remainder}

On the support of \(\chi\), we have \(0\le z\le 2\). Hence we may use the
small-argument expansion \cite{W} for the bessel function $I_{\nu}$:
\[
I_\nu(z)
=
\sum_{m=0}^{n}
\frac{1}{m!\Gamma(m+\nu+1)}
\left(\frac z2\right)^{2m+\nu}
+
R_{I,n}(z),
\]
where , for \(0\le z\le 2\),
\[
|R_{I,n}(z)|
\le
C_n z^{2n+\nu+2}.
\]
Since $
\frac z2=\frac{|x||y|}{4t}, $
substituting this expansion into the kernel gives
\[
\begin{aligned}
K(x,y,t)
&=
\sum_{m=0}^{n}
a_{m,\nu}
t^{-1-\nu-2m}
|x|^{\nu-\mu+2m}
|y|^{\nu-\mu+2m}
e^{-\frac{|x|^2}{4t}}
e^{-\frac{|y|^2}{4t}}
\\
&\qquad
+
\mathcal R_{I,n}(x,y,t),
\end{aligned}
\]
where
\[
a_{m,\nu}
:=
\frac{1}{2^{2\nu+4m+1}m!\Gamma(m+\nu+1)}
\]
and
\[
\mathcal R_{I,n}(x,y,t)
:=
\frac1{2t}
(|x||y|)^{-\mu}
e^{-\frac{|x|^2+|y|^2}{4t}}
R_{I,n}\left(\frac{|x||y|}{2t}\right).
\]
The bound on \(R_{I,n}\) gives
\[
|\mathcal R_{I,n}(x,y,t)|
\le
C
t^{-3-\nu-2n}
|x|^{\nu-\mu+2n+2}
|y|^{\nu-\mu+2n+2}
e^{-\frac{|x|^2+|y|^2}{4t}}.
\]

Therefore the contribution of the Bessel remainder to the solution is
\[
\mathcal E_{I,n}(x,t)
:=
\int_{\mathbb R^N}
\chi(z)
\mathcal R_{I,n}(x,y,t)u_0(y)\,dy,
\]

Using the pointwise bound for \(\mathcal R_{I,n}\), we obtain
\[
\begin{aligned}
|\mathcal E_{I,n}(x,t)|
&\le
C
t^{-3-\nu-2n}
|x|^{\nu-\mu+2n+2}
e^{-\frac{|x|^2}{4t}}
\\
&\qquad \times
\int_{\mathbb R^N}
\chi(z)
|y|^{\nu-\mu+2n+2}
e^{-\frac{|y|^2}{4t}}
|u_0(y)|\,dy .
\end{aligned}
\]
Since \(0\le \chi\le 1\) and \(e^{-|y|^2/(4t)}\le 1\), it follows that
\[
|\mathcal E_{I,n}(x,t)|
\le
C
t^{-3-\nu-2n}
|x|^{\nu-\mu+2n+2}
e^{-\frac{|x|^2}{4t}}
\int_{\mathbb R^N}
|y|^{\nu-\mu+2n+2}
|u_0(y)|\,dy .
\]
By the moment assumption \eqref{momentassumption}, we get
\[
\int_{\mathbb R^N}
|y|^{\nu-\mu+2n+2}
|u_0(y)|\,dy<\infty,
\]
thus
\[
|\mathcal E_{I,n}(x,t)|
\le
C\cdot M_{\nu,n+1}(|u_0|)\cdot
t^{-3-\nu-2n}
|x|^{\nu-\mu+2n+2}
e^{-\frac{|x|^2}{4t}}.
\]

Taking the \(L^p(\mathbb R^N)\)-norm gives
\[
\|\mathcal E_{I,n}(t)\|_{L^p(\R^N)}
\le
C\cdot M_{\nu,n+1}(|u_0|)
t^{-3-\nu-2n}
\left\|
|x|^{\nu-\mu+2n+2}
e^{-\frac{|x|^2}{4t}}
\right\|_{L^p(\R^N)}.
\]
Using Lemma \ref{gammaint}, we have
\[
\left\|
|x|^{\nu-\mu+2n+2}
e^{-\frac{|x|^2}{4t}}
\right\|_{L^p(\R^N)}
=
C
t^{\frac{\nu-\mu+2n+2}{2}+\frac{N}{2p}},
\]
provided that $p(\nu-\mu+2n+2)+N>0. $
This condition is guaranteed by the upper $p$ bound $p<\frac{N}{\mu-\nu}$.
Therefore
\[
\|\mathcal E_{I,n}(t)\|_{L^p(\R^N)}
\le
C\cdot M_{\nu,n+1}(|u_0|)\cdot
t^{-3-\nu-2n
+\frac{\nu-\mu+2n+2}{2}
+\frac{N}{2p}}.
\]
Since $\mu=(N-2)/2$, the exponent simplifies as
\[
-3-\nu-2n
+\frac{\nu-\mu+2n+2}{2}
+\frac{N}{2p}
=
-\gamma_p-n-1,
\]
where $\gamma_p$ is defined in \eqref{gammapdef}.

Hence
\[
\|\mathcal E_{I,n}(t)\|_{L^p(\mathbb R^N)}
\le
C\cdot M_{\nu,n+1}(|u_0|)\cdot t^{-\gamma_p-n-1}.
\]
The remaining part of the finite Bessel expansion is therefore
\begin{equation}\label{besselremainder}
    u_{small}(x,t)-\mathcal{E}_{I,n}(x,t)=\sum_{m=0}^{n}
a_{m,\nu}
t^{-1-\nu-2m}
|x|^{\nu-\mu+2m}
e^{-\frac{|x|^2}{4t}}
\int_{\mathbb R^N}
\chi(z)|y|^{\nu-\mu+2m}
e^{-\frac{|y|^2}{4t}}
u_0(y)\,dy.
\end{equation}

\textbf{Step 2: Estimating the Gaussian error}

We now expand the Gaussian factor depending on \(y\). For each fixed
\(m\in\{0,\dots,n\}\), we write
\[
e^{-\frac{|y|^2}{4t}}
=
\sum_{\ell=0}^{n-m}
\frac{(-1)^\ell}{4^\ell \ell!}
\frac{|y|^{2\ell}}{t^\ell}
+
R_{G,n-m}(y,t),
\]
where
\[
|R_{G,n-m}(y,t)|
\le
C
\left(\frac{|y|^2}{t}\right)^{n-m+1}.
\]
The error produced by the exponential remainder is
\[
\mathcal E_{G,n}(x,t)
:=
\sum_{m=0}^{n}
a_{m,\nu}
t^{-1-\nu-2m}
|x|^{\nu-\mu+2m}
e^{-\frac{|x|^2}{4t}}
\int_{\mathbb R^N}
\chi(z)|y|^{\nu-\mu+2m}
R_{G,n-m}(y,t)
u_0(y)\,dy.
\]

Using the bound on \(R_{G,n-m}\) and the fact that \(0\le \chi\le 1\), we obtain
\[
\begin{aligned}
|\mathcal E_{G,n}(x,t)|
&\le
C
\sum_{m=0}^{n}
t^{-1-\nu-2m}
|x|^{\nu-\mu+2m}
e^{-\frac{|x|^2}{4t}}
\\
&\qquad \times
\int_{\mathbb R^N}
|y|^{\nu-\mu+2m}
\left(\frac{|y|^2}{t}\right)^{n-m+1}
|u_0(y)|\,dy .
\end{aligned}
\]
Combining the powers of \(|y|\), we have
\[
|\mathcal E_{G,n}(x,t)|
\le
C
\sum_{m=0}^{n}
t^{-2-\nu-n-m}
|x|^{\nu-\mu+2m}
e^{-\frac{|x|^2}{4t}}
\int_{\mathbb R^N}
|y|^{\nu-\mu+2n+2}|u_0(y)|\,dy.
\]
By the moment assumption \eqref{momentassumption}
\[
\int_{\mathbb R^N}
|y|^{\nu-\mu+2n+2}|u_0(y)|\,dy<\infty,
\]
we get
\[
|\mathcal E_{G,n}(x,t)|
\le
C\cdot M_{\nu,n+1}(|u_0|)\cdot
\sum_{m=0}^{n}
t^{-2-\nu-n-m}
|x|^{\nu-\mu+2m}
e^{-\frac{|x|^2}{4t}}.
\]

Taking the \(L^p(\mathbb R^N)\)-norm, we obtain
\[
\|\mathcal E_{G,n}(t)\|_{L^p(\R^N)}
\le
C\cdot M_{\nu,n+1}(|u_0|)\cdot
\sum_{m=0}^{n}
t^{-2-\nu-n-m}
\left\|
|x|^{\nu-\mu+2m}
e^{-\frac{|x|^2}{4t}}
\right\|_{L^p(\R^N)}.
\]
By Lemma~\ref{gammaint}, with $
a=\nu-\mu+2m,$ $c=\frac14, $ we have
\[
\left\|
|x|^{\nu-\mu+2m}
e^{-\frac{|x|^2}{4t}}
\right\|_{L^p(\R^N)}
=
C
t^{\frac{\nu-\mu+2m}{2}+\frac{N}{2p}},
\]
provided that $p(\nu-\mu+2m)+N>0. $
This condition is implied by $p<\dfrac{N}{\mu-\nu}$ since $m\geq 0$.
Hence
\[
\|\mathcal E_{G,n}(t)\|_{L^p(\R^N)}
\le
C\cdot M_{\nu,n+1}(|u_0|)\cdot 
\sum_{m=0}^{n}
t^{-2-\nu-n-m
+\frac{\nu-\mu+2m}{2}
+\frac{N}{2p}}.
\]
Since $\mu=(N-2)/2$, for every \(m=0,\dots,n\), the exponent simplifies to
\[
-2-\nu-n-m
+
\frac{\nu-\mu+2m}{2}
+
\frac{N}{2p}
=
-\gamma_p-n-1,
\]
where $\gamma_p$ is defined in \eqref{gammapdef}. Therefore
\[
\|\mathcal E_{G,n}(t)\|_{L^p(\R^N)}
\le
C\cdot M_{\nu,n+1}(|u_0|)\cdot
\sum_{m=0}^{n}
t^{-\gamma_p-n-1}.
\]
Since the sum has only finitely many terms, we conclude that
\[
\|\mathcal E_{G,n}(t)\|_{L^p(\mathbb R^N)}
\le
C\cdot M_{\nu,n+1}(|u_0|)\cdot t^{-\gamma_p-n-1}.
\]

\textbf{Step 3: Estimating the cutoff error}

Using \eqref{besselremainder} and the Gaussian expansion and subtracting the  error yields
\begin{equation}
\begin{split}
    u_{small}(x,t)&-\mathcal{E}_{I,n}(x,t)-\mathcal{E}_{G,n}(x,t)\\
&=
\sum_{m=0}^{n}
\sum_{\ell=0}^{n-m}
a_{m,\nu}
\frac{(-1)^\ell}{4^\ell\ell!}
t^{-1-\nu-2m-\ell}
|x|^{\nu-\mu+2m}
e^{-\frac{|x|^2}{4t}}
\int_{\mathbb R^N}
\chi(z)|y|^{\nu-\mu+2m+2\ell}
u_0(y)\,dy.
\end{split}
\end{equation}

We now obtain the cut-off error using $\chi=1+(\chi-1)$. Let
\begin{equation}\label{Andef}
   A_n(x,t)
:=
\sum_{m=0}^{n}
\sum_{\ell=0}^{n-m}
a_{m,\nu}
\frac{(-1)^\ell}{4^\ell\ell!}
t^{-1-\nu-2m-\ell}
|x|^{\nu-\mu+2m}
e^{-\frac{|x|^2}{4t}}
\int_{\mathbb R^N}
|y|^{\nu-\mu+2m+2\ell}
u_0(y)\,dy. 
\end{equation}

and
\[
\mathcal E_{\chi,n}(x,t):=
\sum_{m=0}^{n}
\sum_{\ell=0}^{n-m}
a_{m,\nu}
\frac{(-1)^\ell}{4^\ell\ell!}
t^{-1-\nu-2m-\ell}
|x|^{\nu-\mu+2m}
e^{-\frac{|x|^2}{4t}}
\int_{\mathbb R^N}
(\chi(z)-1)|y|^{\nu-\mu+2m+2\ell}
u_0(y)\,dy.
\]
We will later show that $A_n$ is precisely the profile defined in \eqref{Andef}. Let $
j:=m+\ell. $
Then \(0\le j\le n\), and
\[
\nu-\mu+2m+2\ell=\nu-\mu+2j.
\]
Since $\chi(z)-1\neq 0$ only for \(\{z=\frac{|x||y|}{2t}\ge1\}\), we may write, for such $z$
\[
1
\le
C\left(\frac{|x||y|}{t}\right)^{2(n+1-j)},\qquad j=0,1,\dots n.
\]
Therefore
\[
|\chi(z)-1|
\le \mathbf 1_{\{z\ge1\}}\leq
C
\left(\frac{|x||y|}{t}\right)^{2(n+1-j)}
\mathbf 1_{\{z\ge1\}}.
\]
Hence
\[
\begin{aligned}
&\left|
\int_{\mathbb R^N}
(\chi(z)-1)
|y|^{\nu-\mu+2j}
u_0(y)\,dy
\right|
\\
&\qquad\le
C
t^{-2(n+1-j)}
|x|^{2(n+1-j)}
\int_{\mathbb R^N}
|y|^{\nu-\mu+2n+2}
|u_0(y)|\,dy.\\
&\qquad =
C
t^{-2(n+1-j)}
|x|^{2(n+1-j)} M_{\nu,n+1}(|u_0|).
\end{aligned}
\]
Consequently,
\[
\begin{aligned}
|\mathcal E_{\chi,n}(x,t)|
&\le
C\cdot M_{\nu,n+1}(|u_0|)\cdot
\sum_{m=0}^{n}
\sum_{\ell=0}^{n-m}
t^{-1-\nu-2m-\ell}\cdot
t^{-2(n+1-j)}
\\
&\qquad\qquad\times
|x|^{\nu-\mu+2m+2(n+1-j)}
e^{-\frac{|x|^2}{4t}}.
\end{aligned}
\]
Taking the \(L^p(\mathbb R^N)\)-norm gives
\[
\begin{aligned}
\|\mathcal E_{\chi,n}(t)\|_{L^p(\R^N)}
&\le
C\cdot M_{\nu,n+1}(|u_0|)\cdot
\sum_{m=0}^{n}
\sum_{\ell=0}^{n-m}
t^{-1-\nu-2m-\ell-2(n+1-j)}
\\
&\qquad\qquad\times
\left\|
|x|^{\nu-\mu+2m+2(n+1-j)}
e^{-\frac{|x|^2}{4t}}
\right\|_{L^p(\R^N)}.
\end{aligned}
\]
By Lemma \ref{gammaint}, with $a=\nu-\mu+2m+2(n+1-j), \ 
c=\frac14,$
we have
\[
\left\|
|x|^{\nu-\mu+2m+2(n+1-j)}
e^{-\frac{|x|^2}{4t}}
\right\|_{L^p(\R^N)}
=
C
t^{\frac{\nu-\mu+2m+2(n+1-j)}{2}+\frac N{2p}},
\]
provided
\[
p\bigl(\nu-\mu+2m+2(n+1-j)\bigr)+N>0.
\]
This condition is guaranteed by the assumption $p<\frac{N}{\mu-\nu}$.
Therefore
\[
\begin{aligned}
\|\mathcal E_{\chi,n}(t)\|_{L^p(\R^N)}
&\le
C\cdot M_{\nu,n+1}(|u_0|)\cdot
\sum_{m=0}^{n}
\sum_{\ell=0}^{n-m}
t^{-1-\nu-2m-\ell-2(n+1-j)}\cdot t^{\frac{\nu-\mu+2m+2(n+1-j)}{2}+\frac N{2p}}.
\end{aligned}
\]
Using \(j=m+\ell\), the exponent becomes $-\gamma_p-n-1, $
where is defined in \eqref{gammapdef}. Thus each term in the finite double sum is bounded by
\[
C t^{-\gamma_p-n-1}.
\]
Hence
\[
\|\mathcal E_{\chi,n}(t)\|_{L^p(\mathbb R^N)}
\le
C \cdot M_{\nu,n+1}(|u_0|)\cdot t^{-\gamma_p-n-1}.
\]
\textbf{Step 4: Estimating $u_{\text{large}}$.} On the support of \(1-\chi(z)\), we have \(z\ge 1\). We use the
large-argument estimate
\[
I_\nu(z)\le C z^{-1/2}e^z,\qquad z\ge 1.
\]
Therefore
\[
\begin{aligned}
|(1-\chi(z))K(x,y,t)|
&\le
C t^{-1}(|x||y|)^{-\mu}
e^{-\frac{|x|^2+|y|^2}{4t}}
\left(\frac{|x||y|}{2t}\right)^{-1/2}
e^{\frac{|x||y|}{2t}}
\mathbf 1_{\{|x||y|\ge 2t\}}      \\
&\le
C t^{-1/2}
(|x||y|)^{-\mu-\frac12}
e^{-\frac{(|x|-|y|)^2}{4t}}
\mathbf 1_{\{|x||y|\ge 2t\}} .
\end{aligned}
\]
Since $\mu=\frac{N-2}{2}$, we have $
-\mu-\frac12=-\frac{N-1}{2}. $
Hence
\[
|(1-\chi(z))K(x,y,t)|
\le
C t^{-1/2}
|x|^{-\frac{N-1}{2}}
|y|^{-\frac{N-1}{2}}
e^{-\frac{(|x|-|y|)^2}{4t}}
\mathbf 1_{\{|x||y|\ge 2t\}} .
\]
Consequently,
\[
|u_{\rm large}(x,t)|
\le
C t^{-1/2}
|x|^{-\frac{N-1}{2}}
\int_{\mathbb R^N}
|y|^{-\frac{N-1}{2}}
e^{-\frac{(|x|-|y|)^2}{4t}}
\mathbf 1_{\{|x||y|\ge 2t\}}
|u_0(|y|)|\,dy .
\]

We now estimate this in \(L^p(\mathbb R^N)\). By Minkowski's inequality,
for \(1\le p<\infty\),
\begin{equation}\label{eq11}
    \begin{aligned}
\|u_{\rm large}(t)\|_{L^p(\R^N)}
&\le
C t^{-1/2}
\int_{\mathbb R^N}
|y|^{-\frac{N-1}{2}}
|u_0(y)|
\\
&\qquad\qquad\times
\left\|
|x|^{-\frac{N-1}{2}}
e^{-\frac{(|x|-|y|)^2}{4t}}
\mathbf 1_{\{|x||y|\ge 2t\}}
\right\|_{L^p_x}
dy .
\end{aligned}
\end{equation}

For fixed \(y\), using polar coordinates in the \(x\)-variable,
\[
\begin{aligned}
&\left\|
|x|^{-\frac{N-1}{2}}
e^{-\frac{(|x|-|y|)^2}{4t}}
\mathbf 1_{\{|x||y|\ge 2t\}}
\right\|_{L^p_x}^p  \\
&\qquad =
C
\int_{2t/|y|}^{\infty}
r^{-\frac{(N-1)p}{2}+N-1}
e^{-\frac{p(r-|y|)^2}{4t}}
\,dr .
\end{aligned}
\]
Now put $
r=\sqrt t\,s,
\qquad
|y|=\sqrt t\,\eta . $
Then
\[
\begin{aligned}
&\left\|
|x|^{-\frac{N-1}{2}}
e^{-\frac{(|x|-|y|)^2}{4t}}
\mathbf 1_{\{|x||y|\ge 2t\}}
\right\|_{L^p_x}  \\
&\qquad =
C t^{-\frac{N-1}{4}+\frac N{2p}}
\left(
\int_{2/\eta}^{\infty}
s^{-\frac{(N-1)p}{2}+N-1}
e^{-\frac p4(s-\eta)^2}
\,ds
\right)^{1/p}.
\end{aligned}
\]

Using Lemma \ref{localization} with $c_0=2$ and $\beta=-\frac{(N-1)p}{2}+N-1$, for every \(M>0\),
\[
\left(
\int_{2/\eta}^{\infty}
s^{-\frac{(N-1)p}{2}+N-1}
e^{-\frac p4(s-\eta)^2}
\,ds
\right)^{1/p}
\le C_M\eta^M,
\qquad 0<\eta\le1,
\]
and
\[
\left(
\int_{2/\eta}^{\infty}
s^{-\frac{(N-1)p}{2}+N-1}
e^{-\frac p4(s-\eta)^2}
\,ds
\right)^{1/p}
\le
C\eta^{-\frac{N-1}{2}+\frac{N-1}{p}},
\qquad \eta\ge1.
\]
Since $|y|=\sqrt{t}\eta$, we get
\begin{equation}\label{idk1}
\left\|
|x|^{-\frac{N-1}{2}}
e^{-\frac{(|x|-|y|)^2}{4t}}
\mathbf 1_{\{|x||y|\ge 2t\}}
\right\|_{L^p_x}
\le
C
\begin{cases}
t^{-\frac{N-1}{4}+\frac N{2p}-\frac M2}|y|^M,
& 0<|y|\le \sqrt t, \\[2mm]
t^{\frac1{2p}}|y|^{-\frac{N-1}{2}+\frac{N-1}{p}},
& |y|\ge \sqrt t .
\end{cases}
\end{equation}
We split
\[
u_{\rm large}=u_{\rm large}^{(1)}+u_{\rm large}^{(2)},
\]
where \(u_{\rm large}^{(1)}\) corresponds to \(|y|\le\sqrt t\), and
\(u_{\rm large}^{(2)}\) corresponds to \(|y|\ge\sqrt t\).

First consider the region \(|y|\le \sqrt t\). Choose
\[
M=\nu-\mu+2n+2+\frac{N-1}{2}.
\]
Since
\[
\nu-\mu+2n+2+\frac{N-1}{2}
=
\nu+2n+\frac52>0,
\]
this is admissible in Lemma~\ref{localization}. Moreover,
\[
-\frac{N-1}{2}+M
=
\nu-\mu+2n+2.
\]
Using \eqref{eq11} and \eqref{idk1}, we obtain
\[
\begin{aligned}
\|u_{\rm large}^{(1)}(t)\|_{L^p(\R^N)}
&\le
C t^{-1/2}
t^{-\frac{N-1}{4}+\frac N{2p}-\frac M2}
\int_{\{|y|\le\sqrt t\}}
|y|^{-\frac{N-1}{2}+M}|u_0(y)|\,dy
\\
&=
C t^{-1/2-\frac{N-1}{4}+\frac N{2p}-\frac M2}
M_{\nu,n+1}(|u_0|) .
\end{aligned}
\]
By the moment assumption \eqref{momentassumption}, we get
\[
\|u_{\rm large}^{(1)}(t)\|_{L^p(\R^N)}
\le
C t^{-1/2-\frac{N-1}{4}+\frac N{2p}
-\frac12\left(\nu-\mu+2n+2+\frac{N-1}{2}\right)}.
\]
Using \(\mu=(N-2)/2\), the exponent simplifies to
\[
-\gamma_p-n-1,
\]
where $\gamma_p$ is defined in \eqref{gammapdef}.
Hence
\begin{equation}\label{ularge1}
\|u_{\rm large}^{(1)}(t)\|_{L^p(\R^N)}
\le
C M_{\nu,n+1}(|u_0|)\cdot t^{-\gamma_p-n-1}.
\end{equation}

Now consider the region \(|y|\ge\sqrt t\). From \eqref{idk1}, we have
\[
\left\|
|x|^{-\frac{N-1}{2}}
e^{-\frac{(|x|-|y|)^2}{4t}}
\mathbf 1_{\{|x||y|\ge 2t\}}
\right\|_{L^p_x}
\le
C t^{\frac1{2p}}
|y|^{-\frac{N-1}{2}+\frac{N-1}{p}}.
\]
Therefore
\[
\begin{aligned}
\|u_{\rm large}^{(2)}(t)\|_{L^p(\R^N)}
&\le
C t^{-\frac12+\frac1{2p}}
\int_{\{|y|\ge\sqrt t\}}
|y|^{-(N-1)+\frac{N-1}{p}}
|u_0(|y|)|\,dy .
\end{aligned}
\]

Since we assumed that $p\ge
\frac{2(N-1)}{N+2\nu+4n+4}$, it follows that
\[
-(N-1)+\frac{N-1}{p}
\le
\nu-\mu+2n+2.
\]
Then, on the set \(\{|y|\ge\sqrt t\}\), we have
\[
|y|^{-(N-1)+\frac{N-1}{p}}
\le
t^{\frac12\left(-(N-1)+\frac{N-1}{p}
-(\nu-\mu+2n+2)\right)}
|y|^{\nu-\mu+2n+2}.
\]
Hence
\[
\begin{aligned}
\|u_{\rm large}^{(2)}(t)\|_{L^p(\R^N)}
&\le
C t^{-\frac12+\frac1{2p}}
\cdot t^{\frac12\left(-(N-1)+\frac{N-1}{p}
-(\nu-\mu+2n+2)\right)}
\\
&\qquad \times
\int_{\{|y|\ge\sqrt t\}}
|y|^{\nu-\mu+2n+2}|u_0(|y|)|\,dy .
\end{aligned}
\]
Using again the moment assumption \eqref{momentassumption}, we obtain
\[
\|u_{\rm large}^{(2)}(t)\|_{L^p(\R^N)}
\le
C\cdot M_{\nu,n+1}(|u_0|)\cdot t^{-\frac12+\frac1{2p}}
t^{\frac12\left(-(N-1)+\frac{N-1}{p}
-(\nu-\mu+2n+2)\right)}.
\]
The exponent simplifies to
\[
-\gamma_p-n-1.
\]
Therefore
\begin{equation}\label{ularge2}
\|u_{\rm large}^{(2)}(t)\|_{L^p(\R^N)}
\le
C\cdot M_{\nu,n+1}(|u_0|)\cdot t^{-\gamma_p-n-1}.
\end{equation}

Combining \eqref{ularge1} and \eqref{ularge2}, and using
\(M_{\nu,n+1}(|u_0|)\le \|u_0\|_{1,\nu,n+1}\), we obtain
\[\|u_{\rm large}(t)\|_{L^p(\R^N)}
\le
C\cdot M_{\nu,n+1}(|u_0|)\cdot t^{-\gamma_p-n-1},\quad t\geq 1\]
\textbf{Step 5: Completing the proof}

We arrive at the following decomposition:
\[
u(x,t)=
A_n(x,t)
+
\mathcal E_{I,n}(x,t)
+
\mathcal E_{G,n}(x,t)
+
\mathcal E_{\chi,n}(x,t)+u_{\rm large}(x,t)
\]
with $A_n(x,t)$ defined as
\begin{equation}
   A_n(x,t):=
\sum_{m=0}^{n}
\sum_{\ell=0}^{n-m}
a_{m,\nu}
\frac{(-1)^\ell}{4^\ell\ell!}
t^{-1-\nu-2m-\ell}
|x|^{\nu-\mu+2m}
e^{-\frac{|x|^2}{4t}}
M_{\nu,m+\ell}(u_0). 
\end{equation}

We now claim that $A_n(x,t)$ coincides with the profile defined in \eqref{asymptoticprofiledef}. Indeed, set $j:=m+\ell. $ For a fixed \(j\), all pairs \((m,\ell)\) with \(m+\ell=j\) contribute to the
same moment \(M_{\nu,j}(u_0)\). Since \(\ell=j-m\), the contribution of total
order \(j\) equals
\[
M_{\nu,j}(u_0)
\sum_{m=0}^{j}
a_{m,\nu}
\frac{(-1)^{j-m}}{4^{j-m}(j-m)!}
t^{-1-\nu-2m-(j-m)}
|x|^{\nu-\mu+2m}
e^{-\frac{|x|^2}{4t}}.
\]
Using
\[
t^{-1-\nu-2m-(j-m)}
|x|^{\nu-\mu+2m}
=
t^{-1-\nu-j}
|x|^{\nu-\mu}
\left(\frac{|x|^2}{t}\right)^m,
\]
we obtain
\[
A_n(x,t)
=
\sum_{j=0}^{n}
M_{\nu,j}(u_0)
t^{-1-\nu-j}
|x|^{\nu-\mu}
e^{-\frac{|x|^2}{4t}}
P_j\left(\frac{|x|^2}{t}\right),
\]
where
\[
P_j(s)
:=
\sum_{m=0}^{j}
a_{m,\nu}
\frac{(-1)^{j-m}}{4^{j-m}(j-m)!}
s^m.
\]
Equivalently,
\[P_j(s):=\sum_{m=0}^{j}\frac{(-1)^{j-m}}{2^{2(\nu+j+m)+1}m!(j-m)!\Gamma(m+\nu+1)}
s^m\]
which is precisely the polynomial defined in \eqref{pndef}. With this notation,
\[ A_n(x,t)
=
\sum_{j=0}^{n}\mathcal{P}_j(x,t).
\]
and
\[
u(x,t)
=
A_n(x,t)
+
\mathcal E_{I,n}(x,t)
+
\mathcal E_{G,n}(x,t)
+
\mathcal E_{\chi,n}(x,t)
+
u_{\rm large}(x,t).
\]
and the claim is proven. We now take the \(L^p(\mathbb R^N)\)-norm. By the triangle inequality,
\[
\left\|
u(t)-A_n(t)
\right\|_{L^p(\R^N)}
\le
\|\mathcal E_{I,n}(t)\|_{L^p(\R^N)}
+
\|\mathcal E_{G,n}(t)\|_{L^p(\R^N)}
+
\|\mathcal E_{\chi,n}(t)\|_{L^p(\R^N)}
+
\|u_{\rm large}(t)\|_{L^p(\R^N)}.
\]

From the estimates obtained in the previous steps, we have
\[
\left\|
u(t)-A_n(t)
\right\|_{L^p(\R^N)}
\le
C\cdot M_{\nu,n+1}(|u_0|)\cdot t^{-\gamma_p-n-1}
\]
for all \(t\ge1\). This completes the proof.
\end{proof}

\begin{corollary}\label{vanishingmomentestimate}
Let \(f:\mathbb R^N\to\mathbb R\) be radial. Assume
\[
\int_{\mathbb R^N}
|x|^{\nu-\mu}\bigl(1+|x|^{2n}\bigr)|f(x)|\,dx
<\infty.
\]
and
\[
M_{\nu,j}(f)=0,
\qquad j=0,\dots,n-1.
\]
Let $p\in[\frac{2(N-1)}{N+2\nu+4n},\frac{N}{\mu-\nu})$. Then there exists \(C>0\), independent of \(f\) and \(t\), such that for all
\(t\ge1\),
\[
\|S_\lambda(t)f\|_{L^p(\mathbb R^N)}
\le
C t^{-\gamma_p-n}
M_{\nu,n}(|f|).
\]
where
\[S_{\lambda}(t)f=\int_{\R^N}K(x,y,t)f(y)dy.\]
\end{corollary}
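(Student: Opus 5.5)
The plan is to apply Proposition~\ref{asymptoticbound} with $n$ replaced by $n-1$, taking $f$ as initial datum. For $n\ge1$ the moment hypothesis of the corollary,
\[
\int_{\mathbb R^N}|x|^{\nu-\mu}\bigl(1+|x|^{2n}\bigr)|f(x)|\,dx<\infty,
\]
is exactly assumption \eqref{momentassumption} of Proposition~\ref{asymptoticbound} with $n-1$ in place of $n$, since $2(n-1)+2=2n$. The admissible range also matches: the proposition with index $n-1$ requires
\[
p\in\left[\frac{2(N-1)}{N+4+2\nu+4(n-1)},\frac{N}{\mu-\nu}\right)
=\left[\frac{2(N-1)}{N+2\nu+4n},\frac{N}{\mu-\nu}\right),
\]
which is the range in the statement. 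The proposition then gives, for $t\ge1$,
\[
\|S_\lambda(t)f-A_{n-1}(\cdot,t)\|_{L^p(\mathbb R^N)}\le C\,M_{\nu,n}(|f|)\,t^{-\gamma_{p}-(n-1)-1}
=C\,M_{\nu,n}(|f|)\,t^{-\gamma_p-n}.
\]

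The second step is to note that $A_{n-1}$, built from $f$, vanishes identically. By \eqref{asymptoticprofiledef} and \eqref{pndef},
\[
A_{n-1}=\sum_{j=0}^{n-1}\mathcal P_j,
\]
and each $\mathcal P_j$ carries the factor $M_{\nu,j}(f)$, which is zero by hypothesis. Hence $S_\lambda(t)f$ is itself the remainder, and the desired estimate follows.

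It remains to check two points. First, the constant $C$ depends only on $N,\lambda,n,p$ and the cutoff $\chi$, not on $f$. This holds because every step in the proof of Proposition~\ref{asymptoticbound} is linear in $|u_0|$ and ends with the factor $M_{\nu,n+1}(|u_0|)$; there the bound on $u_{\rm large}^{(1)}$ uses precisely this moment. Second, the case $n=0$ is not covered by the proposition, since there are no vanishing-moment conditions and the index $n-1$ is negative. In that case I would estimate directly. For the small-$z$ part, use $I_\nu(z)\le Cz^\nu$ for $z\le2$, which bounds $|S_\lambda(t)f|$ by
\[
C\,t^{-1-\nu}|x|^{\nu-\mu}e^{-|x|^2/4t}\,M_{\nu,0}(|f|).
\]
By Lemma~\ref{gammaint}, its $L^p$ norm is $\lesssim t^{-\gamma_p}$. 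The large-$z$ part is handled exactly as in Step~4 of the proof of Proposition~\ref{asymptoticbound}, with $M=\nu-\mu+\frac{N-1}{2}$; I still have to verify that this choice is admissible, i.e.\ $M=\nu+\frac12>0$, which holds. Also, the lower bound $p\ge\frac{2(N-1)}{N+2\nu}$ is precisely what the $|y|\ge\sqrt t$ region needs. The main (mild) obstacle is this bookkeeping of the Step~4 exponents with moment index $0$ instead of $n+1$; the rest is a direct substitution.
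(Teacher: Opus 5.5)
Your proposal is correct and is the paper's own proof: apply Proposition~\ref{asymptoticbound} with $n-1$ in place of $n$, then observe that $A_{n-1}^f\equiv 0$ because every $\mathcal P_j$ with $j\le n-1$ carries the vanishing factor $M_{\nu,j}(f)$. Like the paper, you use the rate $t^{-\gamma_p-n-1}$ that the proof of the proposition actually establishes, not the weaker power $t^{\gamma_p+n}$ in its displayed statement; your separate check of $n=0$, which the paper's one-line argument skips although Theorem~\ref{asymptoticprofilethm} with $n=0$ relies on it, is also sound, since it is the proposition's argument run with an empty expansion, taking $M=\nu+\frac12$ in Step~4 and using $p\ge\frac{2(N-1)}{N+2\nu}$ exactly in the $|y|\ge\sqrt t$ region.
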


\begin{proof}
By Proposition \ref{asymptoticbound} applied for $n-1$ instead of $n$, we have that
\[
\left\|
S_\lambda(t)f-A_{n-1}^f(t)
\right\|_{L^p(\R^N)}
\le
C t^{-\gamma_p-n}M_{\nu,n}(|f|).
\]
But by assumption,
\[
M_{\nu,j}(f)=0,
\qquad j=0,\dots,n-1.
\]
so $
A_{n-1}^f\equiv0, $
Therefore
\[
\|S_\lambda(t)f\|_{L^p(\R^N)}
\le
C t^{-\gamma_p-n}M_{\nu,n}(|f|).
\]
\end{proof}

We now have all the ingredients to prove the first main result:
\begin{proof}[\textbf{Proof of Theorem~\ref{asymptoticprofilethm}}]
We argue by density. By Lemma \ref{bumpfunctions}, there exist radial functions $
\eta_0,\eta_1,\dots,\eta_n\in C_c^\infty(\mathbb R^N) $
such that
\[
M_{\nu,i}(\eta_j)=\delta_{ij},
\qquad
0\le i,j\le n.
\]
Let \(\chi\in C_c^\infty([0,\infty))\) satisfy
\[
0\le\chi\le1,
\qquad
\chi(s)=1\text{ for }0\le s\le1,
\qquad
\chi(s)=0\text{ for }s\ge2.
\]
For \(k\in\mathbb N\), define
\[
\psi_k(x)
:=
\chi\left(\frac{|x|}{k}\right)u_0(|x|).
\]
Let 
\[
\|f\|_{L^1_{\nu,n}}
:=
\int_{\mathbb R^N}
|x|^{\nu-\mu}(1+|x|^{2n})|f(x)|\,dx.
\]
Since $\|u_0\|_{L^1_{\nu,n}}<\infty$, it follows by the Dominated Convergence Theorem that 
\begin{equation}\label{eq16}
    \psi_k\to u_0 \quad \text{in}\quad L^1_{\nu,n}(\mathbb R^N). 
\end{equation}
Therefore for all $j=0,\dots n$, we have
\begin{equation}\label{eq17}
    \begin{split}
        |M_{\nu,j}(\psi_k)-M_{\nu,j}(u_0)|&\leq\int_{\R^N}|x|^{\nu-\mu+2j}\left|\psi_k(x)-u_0(x)\right| dx\\
        &\leq \int_{\R^N}|x|^{\nu-\mu}\left(1+|x|^{2n}\right)\left|\psi_k(x)-u_0(x)\right|dx\\
        &=\|\psi_k-u_0\|_{L^1_{\nu,n}}\stackrel{k\to\infty}{\longrightarrow}0.
    \end{split}
\end{equation}
Thus $M_{\nu,j}(\psi_k)\to M_{\nu,j}(u_0)$ for all $j=0,1,\dots,n$.
Now set
\[
\phi_k := \psi_k + \sum_{j=0}^{n}
\bigl(M_{\nu,j}(u_0)-M_{\nu,j}(\psi_k)\bigr)\eta_j.
\]
Then, for every \(i=0,\dots,n\),
\begin{equation}\label{eq18}
    \begin{split}
        M_{\nu,i}(\phi_k)&=M_{\nu,i}(\psi_k)+\sum_{j=0}^n\left(M_{\nu,j}(u_0)-M_{\nu,j}(\psi_k)\right)M_{\nu,i}(\eta_j)\\
        &=M_{\nu,i}(\psi_k)+\sum_{j=0}^n\left(M_{\nu,j}(u_0)-M_{\nu,j}(\psi_k)\right)\delta_{ij}\\
        &=M_{\nu,i}(\psi_k)+M_{\nu,i}(u_0)-M_{\nu,i}(\psi_k)\\
        &=M_{\nu,i}(u_0).
    \end{split}
\end{equation}
It follows that the asymptotic
profiles of $\phi_k$ and $u_0$ agree:
\[
A_n^{\phi_k}(x,t)=A_n^{u_0}(x,t),
\]
Thus
\begin{equation}\label{profilesplit}
    S_\lambda(t)u_0 - A_n^{u_0}(t)
=
S_\lambda(t)(u_0-\phi_k)
+
\left(
S_\lambda(t)\phi_k
-
A_n^{\phi_k}(t)
\right).
\end{equation}
Multiplying by \(t^{\gamma_p+n}\) and taking the \(L^p\)-norm gives
\[
\begin{aligned}
&t^{\gamma_p+n}
\left\|
S_\lambda(t)u_0-A_n^{u_0}(t)
\right\|_{L^p(\R^N)}
\\
&\qquad\le
t^{\gamma_p+n}
\|S_\lambda(t)(u_0-\phi_k)\|_{L^p(\R^N)}
\\
&\qquad\qquad+
t^{\gamma_p+n}
\left\|
S_\lambda(t)\phi_k
- A_n^{\phi_k}(t)
\right\|_{L^p(\R^N)}.
\end{aligned}
\]
We estimate the two terms separately. By \eqref{eq18}, we have $
M_{\nu,j}(u_0-\phi_k)=0,$ for all $j=0,\dots,n, $ thus Corollary \ref{vanishingmomentestimate}  gives
\[
t^{\gamma_p+n}
\|S_\lambda(t)(u_0-\phi_k)\|_{L^p(\R^N)}
\le
C
M_{\nu,n}(|u_0-\phi_k|).
\]
We now estimate the second term. Since each \(\eta_j\in C_c^\infty(\mathbb R^N)\subset L^1_{\nu,n}(\mathbb R^N)\), and since the coefficients
\[
M_{\nu,j}(u_0)-M_{\nu,j}(\psi_k)
\]
tend to \(0\) by \eqref{eq17}, it follows from \eqref{eq16} that
\[
\phi_k\to u_0
\quad\text{in }L^1_{\nu,n}(\mathbb R^N).
\]
\[1/2\]
In particular, $\|\phi_k\|_{L^1_{\nu,n}}<\infty$ so 
\begin{equation}\label{nustiu3}
    \int_{\R^N}|x|^{\nu-\mu}|\phi_k(x)|dx<\infty,\qquad\int_{\R^N}|x|^{\nu-\mu+2n}|\phi_k(x)|dx<\infty
\end{equation} Since \(\phi_k\) is supported in some ball \(\{|x|\le r_k\}\), we have on this set
\[
|x|^{\nu-\mu+2n+2}
=
|x|^2 |x|^{\nu-\mu+2n}
\le r_k^2 |x|^{\nu-\mu+2n}.
\]
Therefore
\begin{equation}\label{eq19}
    \int_{\mathbb R^N}
|x|^{\nu-\mu+2n+2}|\phi_k(x)|\,dx
\le
r_k^2 M_{\nu,n}(|\phi_k|)
<\infty.
\end{equation}
Combining \eqref{nustiu3} and \eqref{eq19} we get that $\phi_k$ satisfies the stronger moment condition 
\[\|\phi_k\|_{L^1_{\nu,n+1}}<\infty\] required in Proposition \ref{asymptoticbound}. Applying this result, it follows that
\[
\left\|
S_\lambda(t)\phi_k
- A_n^{\phi_k}(t)
\right\|_{L^p(\R^N)}
\le
C\cdot M_{\nu,n+1}(|\phi_k|)\cdot t^{-\gamma_p-n-1}
\]
for $t\geq 1$. Therefore, for fixed \(k\),
\[
t^{\gamma_p+n}
\left\|
S_\lambda(t)\phi_k
-A_n^{\phi_k}(t)
\right\|_{L^p(\R^N)}
\le
C\cdot M_{\nu,n+1}(|\phi_k|)\cdot t^{-1}\to0
\qquad\text{as }t\to\infty.
\]

Taking the \(\limsup\) of \eqref{profilesplit} as \(t\to\infty\), we get
\[
\limsup_{t\to\infty}
t^{\gamma_p+n}
\left\|
S_\lambda(t)u_0-A_n^{u_0}(t)
\right\|_{L^p(\R^N)}
\le
C \cdot M_{\nu,n}(|u_0-\phi_k|)\leq C
\|u_0-\phi_k\|_{L^1_{\nu,n}}.
\]
Finally, letting \(k\to\infty\), and using
\[
\phi_k\to u_0
\qquad\text{in }L^1_{\nu,n}(\mathbb R^N),
\]
we obtain
\[
\limsup_{t\to\infty}
t^{\gamma_p+n}
\left\|
S_\lambda(t)u_0-A_n^{u_0}(t)
\right\|_{L^p(\R^N)}
=0.
\]
Hence
\[
\lim_{t\to\infty}
t^{\gamma_p+n}
\left\|
S_\lambda(t)u_0-A_n^{u_0}(t)
\right\|_{L^p(\R^N)}
=0.
\]
This proves the theorem.
\end{proof}

\begin{proof}[\textbf{Proof of Remark \ref{sharpness}} (Sharpness)]
Assume that
\[
M_{\nu,0}(u_0)=\cdots=M_{\nu,n-1}(u_0)=0,
\qquad
M_{\nu,n}(u_0)\neq0.
\]
Note that there exist such functions due to Lemma \ref{bumpfunctions}. Then \(A_n=\mathcal P_n\). By Theorem~\ref{asymptoticprofilethm},
\[
S_\lambda(t)u_0-\mathcal P_n(\cdot,t)
=
o\left(t^{-\gamma_p-n}\right)
\quad\text{in }L^p(\mathbb R^N).
\]
On the other hand, Lemma~\ref{lemma0.2} gives
\[
\|\mathcal P_n(\cdot,t)\|_{L^p(\mathbb R^N)}
=
C_{n,p,\nu}|M_{\nu,n}(u_0)|t^{-\gamma_p-n}.
\]
Therefore, by the $||a|-|b||\leq |a-b|$ inequality,
\[
\|S_\lambda(t)u_0\|_{L^p(\mathbb R^N)}
=
C_{n,p,\nu}|M_{\nu,n}(u_0)|t^{-\gamma_p-n}
+
o\left(t^{-\gamma_p-n}\right).
\]
Hence the decay rate \(t^{-\gamma_p-n}\) cannot, in general, be improved.
\end{proof}

The non radial asymptotic results now follow from Theorem \ref{asymptoticprofilethm}:
\begin{proof}[\textbf{Proof of Theorem \ref{nonradial-finite}}]
By the non-radial Hardy heat kernel formula obtained in
Proposition~\ref{nonradialkernel}, the solution corresponding to the finite angular
expansion
\[
u_0(r,\omega)
=
\sum_{\ell=0}^{L}
\sum_{m=1}^{d_\ell}
u_{0,\ell,m}(r)Y_{\ell,m}(\omega)
\]
is given by
\[
u(x,t)=u(r,\omega,t)
=
\sum_{\ell=0}^{L}
\sum_{m=1}^{d_\ell}
u_{\ell,m}(r,t)Y_{\ell,m}(\omega),
\]
where
\[
u_{\ell,m}(r,t)
=
\int_0^\infty
K_{\ell}(r,\rho,t)
u_{0,\ell,m}(\rho)\rho^{N-1}\,d\rho,
\]
and
\[
K_{\ell}(r,\rho,t)
=
\frac{1}{2t}
(r\rho)^{-\mu}
\exp\!\left(-\frac{r^2+\rho^2}{4t}\right)
I_{\nu_\ell}\left(\frac{r\rho}{2t}\right).\qquad \nu_\ell=\sqrt{(\ell+\mu)^2-\lambda}.
\]
Applying Theorem~\ref{asymptoticprofilethm} to each
\(u_{0,\ell,m}\), with \(\nu\) replaced by \(\nu_\ell\), gives
\[
\lim_{t\to\infty}
t^{\gamma_{p,\ell}+n}
\left\|
u_{\ell,m}(\cdot,t)
-
\sum_{j=0}^{n}\mathcal P_{\ell,m,j}(\cdot,t)
\right\|_{L^p(\R^N)}
=0,
\]
where $\gamma_{p,\ell}$ is defined in \eqref{gammapldef}. Since \(\nu_\ell\geq\nu_0=\nu\), we have $
\gamma_{p,\ell}\geq\gamma_p. $
Therefore
\[
\lim_{t\to\infty}
t^{\gamma_p+n}
\left\|
u_{\ell,m}(\cdot,t)
-
\sum_{j=0}^{n}\mathcal P_{\ell,m,j}(\cdot,t)
\right\|_{L^p(\R^N)}
=0.
\]

Since the angular expansion is finite and each spherical harmonic
\(Y_{\ell,m}\) is bounded on \(\mathbb S^{N-1}\), we may sum over
\(\ell=0,\dots,L\) and \(m=1,\dots,d_\ell\). Hence
\[
\begin{aligned}
& t^{\gamma_p+n}
\left\|
u(\cdot,t)-A_{n,L}(\cdot,t)
\right\|_{L^p(\mathbb R^N)}
\\
&\qquad\le
C
\sum_{\ell=0}^{L}
\sum_{m=1}^{d_\ell}
t^{\gamma_p+n}
\left\|
u_{\ell,m}(\cdot,t)
-
\sum_{j=0}^{n}\mathcal P_{\ell,m,j}(\cdot,t)
\right\|_{L^p(\R^N)}.
\end{aligned}
\]
Each term on the right-hand side tends to \(0\), and the sum is finite. Therefore
\[
\lim_{t\to\infty}`
t^{\gamma_p+n}
\left\|
u(\cdot,t)-A_{n,L}(\cdot,t)
\right\|_{L^p(\mathbb R^N)}
=0.
\]
\end{proof}

\begin{proof}[\textbf{Proof of Theorem \ref{nonradial-infinite}}]
For each \((\ell,m)\), define the radial remainder
\[
R_{\ell,m}(r,t)
:=
u_{\ell,m}(r,t)
-
\sum_{j=0}^{n}\mathcal P_{\ell,m,j}(r,t).
\]
Then, by the definition of \(A_n^\infty\), we have
\[
u(r\omega,t)-A_n^\infty(r\omega,t)
=
\sum_{\ell=0}^{\infty}
\sum_{m=1}^{d_\ell}
R_{\ell,m}(r,t)Y_{\ell,m}(\omega).
\]

We apply Theorem \ref{asymptoticprofilethm} to each angular mode, with \(\nu\) replaced by \(\nu_\ell\). This gives, for every fixed \((\ell,m)\),
\[
\lim_{t\to\infty}
t^{\gamma_{p,\ell}+n}
\|R_{\ell,m}(\cdot,t)\|_{L^p(\R^N)}
=0.
\]
Since \(\nu_\ell\geq \nu\), we have \(\gamma_{p,\ell}\geq \gamma_p\). Hence
\[
\lim_{t\to\infty}
t^{\gamma_p+n}
\|R_{\ell,m}(\cdot,t)\|_{L^p(\R^N)}
=0.
\]

Moreover, Proposition~\ref{asymptoticbound}, applied with parameter
\(\nu_\ell\), yields a constant \(C_{\ell,n,p}>0\), independent of \(t\), such
that, for every \(t\geq1\),
\[
t^{\gamma_p+n}
\|R_{\ell,m}(\cdot,t)\|_{L^p(\R^N)}
\leq
C_{\ell,n,p}
\|u_{0,\ell,m}\|_{L^1_{\nu_\ell,n+1}}.
\]

We now estimate the full remainder. By Minkowski's inequality and the separation
of variables in polar coordinates,
\[
\begin{aligned}
&t^{\gamma_p+n}
\left\|
u(\cdot,t)-A_n^\infty(\cdot,t)
\right\|_{L^p(\mathbb R^N)}
\\
&\qquad\leq
\sum_{\ell=0}^{\infty}
\sum_{m=1}^{d_\ell}
t^{\gamma_p+n}
\left\|
R_{\ell,m}(r,t)Y_{\ell,m}(\omega)
\right\|_{L^p(\mathbb R^N)}
\\
&\qquad=
\sum_{\ell=0}^{\infty}
\sum_{m=1}^{d_\ell}
\|Y_{\ell,m}\|_{L^p(\mathbb S^{N-1})}
t^{\gamma_p+n}
\|R_{\ell,m}(\cdot,t)\|_{L^p(\R^N)}
\\
&\qquad\leq
\sum_{\ell=0}^{\infty}
\sum_{m=1}^{d_\ell}
\|Y_{\ell,m}\|_{L^p(\mathbb S^{N-1})}
C_{\ell,n,p}
\|u_{0,\ell,m}\|_{L^1_{\nu_\ell,n+1}}.
\end{aligned}
\]
The last series is finite by assumption.

However, for every fixed \((\ell,m)\),
\[
\|Y_{\ell,m}\|_{L^p(\mathbb S^{N-1})}
t^{\gamma_p+n}
\|R_{\ell,m}(\cdot,t)\|_{L^p(\R^N)}
\longrightarrow 0
\qquad\text{as }t\to\infty.
\]
Therefore, by the Dominated Convergence Theorem for series,
\[
\sum_{\ell=0}^{\infty}
\sum_{m=1}^{d_\ell}
\|Y_{\ell,m}\|_{L^p(\mathbb S^{N-1})}
t^{\gamma_p+n}
\|R_{\ell,m}(\cdot,t)\|_{L^p(\R^N)}
\longrightarrow 0.
\]
Consequently,
\[
\lim_{t\to\infty}
t^{\gamma_p+n}
\left\|
u(\cdot,t)-A_n^\infty(\cdot,t)
\right\|_{L^p(\mathbb R^N)}
=0.
\]
This proves the theorem.
\end{proof}

\section{Appendix}
The derivation of the kernel for problem \eqref{problem} makes use of some folowing notions, such as Bessel functions and Hankel transform, for which we reffer the reader to \cite{W},\cite{CCFI} and \cite{EK} respectively.
\begin{prop}[Radial heat kernel]\label{radialkernel}
Let \(u_0\in C_c^\infty(\mathbb R^N\setminus\{0\})\) be radial. The function \(K(x,y,t)\), defined for \(x,y\in\mathbb R^N\setminus\{0\}\), \(t>0\) by
\[
K(x,y,t)
:=
\frac{1}{|\mathbb{S}^{N-1}|}\frac{1}{2t}\,(|x||y|)^{-\frac{N-2}{2}}
\exp\!\left(-\frac{|x|^2+|y|^2}{4t}\right)\,
I_\nu\!\left(\frac{|x||y|}{2t}\right),
\]
generates a heat kernel for \eqref{problem}. That is, the radial semigroup
\[
u(x,t)
=
\int_{\mathbb R^N} K(x,y,t)\,u_0(y)\,dy
\]
belongs to \(C^\infty( (\mathbb R^N\setminus\{0\})\times(0,\infty))\), and pointwise satisfies $u_t=\Delta u+\lambda|x|^{-2}u$. Moreover, \(K(x,y,t)>0\), and \(K\) is fundamental in the radial sense:
for every radial \(u_0\in C_c^\infty(\mathbb R^N\setminus\{0\})\),
\[
\lim_{t\downarrow0}\int_{\mathbb R^N}K(x,y,t)u_0(y)\,dy=u_0(x),
\qquad x\neq0.
\]
\end{prop}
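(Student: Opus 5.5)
The natural route is to reduce the PDE to the radial Bessel operator and use the Hankel transform. Write $u_0(x)=f(|x|)$ with $f\in C_c^\infty(0,\infty)$, and look for a radial solution $u(x,t)=v(r,t)$ with $r=|x|$. On radial functions the equation becomes
\[
v_t=v_{rr}+\frac{N-1}{r}v_r+\frac{\lambda}{r^2}v .
\]
The substitution $v=r^{-\mu}w$ removes the first-order term and yields
\[
w_t=w_{rr}+\frac1r w_r-\frac{\mu^2-\lambda}{r^2}w=w_{rr}+\frac1rw_r-\frac{\nu^2}{r^2}w,
\]
which is the Bessel operator of order $\nu$. Its spectral resolution is the Hankel transform $\mathcal H_\nu g(k)=\int_0^\infty g(\rho)J_\nu(k\rho)\rho\,d\rho$. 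This transform diagonalizes the operator with multiplier $-k^2$, so
\[
w(r,t)=\int_0^\infty e^{-k^2t}J_\nu(kr)\,\mathcal H_\nu w_0(k)\,k\,dk,\qquad w_0(\rho)=\rho^{\mu}f(\rho).
\]
Since $w_0\in C_c^\infty(0,\infty)$, the transform $\mathcal H_\nu w_0$ decays rapidly. Fubini is therefore justified, and we get
\[
w(r,t)=\int_0^\infty\Big(\int_0^\infty e^{-k^2t}J_\nu(kr)J_\nu(k\rho)k\,dk\Big)w_0(\rho)\rho\,d\rho.
\]
The inner integral is Weber's second exponential integral:
\[
\int_0^\infty e^{-k^2t}J_\nu(kr)J_\nu(k\rho)k\,dk=\frac1{2t}e^{-\frac{r^2+\rho^2}{4t}}I_\nu\!\left(\frac{r\rho}{2t}\right),
\]
valid for $\nu>-1$, which holds since $\nu\ge0$. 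Undoing the substitution and writing $\rho^{N-1}d\rho=|\mathbb S^{N-1}|^{-1}dy$ produces exactly the stated $K$, including the $1/|\mathbb S^{N-1}|$ normalization.

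\textbf{Smoothness and the equation.} I would rather verify these directly on the kernel than rely on the transform derivation. Set $k(r,\rho,t)=\frac1{2t}(r\rho)^{-\mu}e^{-(r^2+\rho^2)/4t}I_\nu(r\rho/2t)$. Using the Bessel ODE $z^2I_\nu''+zI_\nu'-(z^2+\nu^2)I_\nu=0$, a direct computation gives
\[
k_t=k_{rr}+\frac{N-1}{r}k_r+\frac{\lambda}{r^2}k
\]
for $r,\rho,t>0$; this amounts to reversing the $r^{-\mu}$ conjugation. The function $u_0$ is supported in some annulus $a\le\rho\le b$, and $(r,t)$ ranges over compact subsets of $(0,\infty)^2$. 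On such sets all derivatives of $k$ are bounded because $I_\nu$ is entire in $z>0$, so we may differentiate under the integral. This gives $u\in C^\infty$ and the pointwise equation. Positivity is immediate from $I_\nu(z)>0$ for $z>0$.

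\textbf{The initial limit (main obstacle).} Fix $r>0$. For small $t$ the argument $z=r\rho/2t$ is large, uniformly for $\rho\in[a,b]$. There we use the asymptotics
\[
I_\nu(z)=\frac{e^z}{\sqrt{2\pi z}}\bigl(1+O(z^{-1})\bigr).
\]
Substituting, the kernel in $\rho$ becomes
\[
\frac{1}{\sqrt{4\pi t}}\,r^{-\mu-\frac12}\rho^{-\mu-\frac12}e^{-\frac{(r-\rho)^2}{4t}}\bigl(1+O(t)\bigr).
\]
After multiplying by $\rho^{N-1}f(\rho)$, the powers combine as $\rho^{N-1-\mu-1/2}=\rho^{\mu+1/2}$, and what remains is a one-dimensional Gaussian approximate identity. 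It converges to $r^{-\mu-1/2}r^{\mu+1/2}f(r)=f(r)$, with the $O(t)$ error integrable and vanishing in the limit.

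The delicate point is that the expansion is uniform only where $z$ is large. This is exactly why the support of $u_0$ must be away from $0$ and why we fix $r>0$. The region where $|r-\rho|$ is not small contributes $O(e^{-c/t})$ by the Gaussian factor. Once uniformity on $[a,b]$ is settled, the limit follows, and hence $K$ is a fundamental solution in the radial sense.
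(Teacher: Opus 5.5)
Your proposal is correct. Its first part is exactly the paper's derivation: the conjugation $u=r^{-\mu}v$, the Hankel transform of order $\nu$, Fubini, Weber's integral, and the normalization $\rho^{N-1}\,d\rho\leftrightarrow |\mathbb S^{N-1}|^{-1}dy$.

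You diverge in how you verify the claimed properties. The paper checks the equation by differentiating the $k$-representation $\int_0^\infty e^{-k^2t}J_\nu(kr)J_\nu(k\rho)k\,dk$ under the integral and using $\mathcal L_\nu J_\nu(kr)=-k^2J_\nu(kr)$. For the initial condition, it iterates $\mathcal H_\nu(\mathcal L_\nu f)=-k^2\mathcal H_\nu f$ (proved by integration by parts) to get rapid decay of $\widehat v_0$. It then lets $t\downarrow0$ by dominated convergence in $k$ and identifies the limit with $v_0(r)$ via the Hankel inversion formula.

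You instead work only with the closed-form kernel. The equation follows from the modified Bessel ODE by direct computation, which does check out. The initial trace comes from the uniform asymptotics $I_\nu(z)=e^z(2\pi z)^{-1/2}(1+O(z^{-1}))$. These turn the kernel into $(4\pi t)^{-1/2}(r\rho)^{-\mu-1/2}e^{-(r-\rho)^2/4t}(1+O(t))$ and reduce the claim to a one-dimensional Gaussian approximate identity applied to $\rho^{\mu+1/2}f(\rho)$.

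Your route makes the Hankel derivation purely motivational, so you never need the diagonalization identity or pointwise Hankel inversion. It works for merely continuous compactly supported data and easily yields convergence locally uniform in $r>0$ with an $O(t)$-type error. It also shows transparently why the support must avoid the origin. The paper's route stays inside the spectral framework and needs no asymptotics of $I_\nu$, at the cost of using the inversion theorem as a black box and the smoothness of $v_0$.

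One wording fix: $I_\nu$ is not entire for non-integer $\nu$. What you need, and what holds, is that it is real-analytic on $(0,\infty)$, with $z=r\rho/2t$ ranging over a compact subset of $(0,\infty)$ on the relevant sets.
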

\begin{proof} 
If we assume $u$ is radial, i.e.\ $u(x,t)=u(r,t)$ with $r=|x|$, then \eqref{problem} becomes
\begin{equation}\label{eq:radial-heat}
\partial_t u
= u_{rr} + \frac{N-1}{r}u_r + \frac{\lambda}{r^2}u.
\end{equation}

It is convenient to remove the first derivative by the standard conjugation $u(r,t) = r^{-\mu}\,v(r,t)$. A short computation gives
\[
u_r = r^{-\mu}\Big(v_r-\frac{\mu}{r}v\Big),\qquad
u_{rr} = r^{-\mu}\Big(v_{rr}-\frac{2\mu}{r}v_r+\frac{\mu(\mu+1)}{r^2}v\Big),
\]
hence
\[
u_{rr}+\frac{N-1}{r}u_r
= r^{-\mu}\left(
v_{rr}+\frac{1}{r}v_r - \frac{\mu^2}{r^2}v
\right),
\]
because $N-1-2\mu = 1$ and $\mu(\mu+1)-(N-1)\mu=-\mu^2$.
Plugging into \eqref{eq:radial-heat} yields the Bessel-type equation
for $v$:
\begin{equation}\label{eq:v-eq}
\partial_t v
= v_{rr}+\frac{1}{r}v_r - \frac{\nu^2}{r^2}\,v.
\end{equation}

In the Hardy range $\lambda<\lambda_*$, we have $\nu>0$ (and $\nu=0$ for $\lambda=\lambda_*$).
Let $\mathcal{L}_\nu := \partial_{rr}+\frac{1}{r}\partial_r - \frac{\nu^2}{r^2}. $
Seeking separated solutions $v(r,t)=\ee^{-k^2 t}\phi_k(r)$ turns \eqref{eq:v-eq}
into the eigenvalue equation
\begin{equation}\label{eq:eig}
\phi_k''(r)+\frac{1}{r}\phi_k'(r) + \left(k^2-\frac{\nu^2}{r^2}\right)\phi_k(r)=0.
\end{equation}
The regular solution at $r=0$ is (up to a constant)
\[
\phi_k(r)=J_\nu(kr),
\]
where $J_\nu$ is the Bessel function \cite{W} of the first kind.

We now introduce the Hankel transform \cite{SN} of order $\nu$ :
\[
(\mathcal{H}_\nu f)(k) \coloneqq \int_0^\infty f(r)\,J_\nu(kr)\,r\,\dd r,
\qquad
f(r)=\int_0^\infty (\mathcal{H}_\nu f)(k)\,J_\nu(kr)\,k\,\dd k,
\]

Applying the Hankel transform to $\mathcal L_\nu f$ for $f\in C^{\infty}_c(\R^N\setminus\{0\})$ gives
\[
\mathcal H_\nu(\mathcal L_\nu f)(k)
=
\int_0^\infty
\left(
f''(r)+\frac1r f'(r)-\frac{\nu^2}{r^2}f(r)
\right)
J_\nu(kr)\,r\,\dd r .
\]
We treat the terms separately. First,
\[
\int_0^\infty f''(r)J_\nu(kr)\,r\,\dd r
=
-\int_0^\infty f'(r)\frac{\dd}{\dd r}\!\big(rJ_\nu(kr)\big)\,\dd r,
\]
All boundary terms vanish because \(f\in C_c^\infty(0,\infty)\); in particular,
\(f\) and all its derivatives vanish near \(0\) and near \(+\infty\). Since
\[
\frac{\dd}{\dd r}\big(rJ_\nu(kr)\big)
=
J_\nu(kr)+krJ_\nu'(kr),
\]
we obtain
\[
\int_0^\infty f''(r)J_\nu(kr)\,r\,\dd r
=
-\int_0^\infty f'(r)J_\nu(kr)\,\dd r
-
k\int_0^\infty f'(r)rJ_\nu'(kr)\,\dd r .
\]
Substituting above, we obtain
\[
\mathcal H_\nu(\mathcal L_\nu f)(k)
=
-k\int_0^\infty f'(r)rJ_\nu'(kr)\,\dd r
-\int_0^\infty \frac{\nu^2}{r}f(r)J_\nu(kr)\,\dd r .
\]
Integrating the first term by parts once more yields
\[
-k\int_0^\infty f'(r)rJ_\nu'(kr)\,\dd r
=
k\int_0^\infty f(r)\frac{\dd}{\dd r}\!\big(rJ_\nu'(kr)\big)\,\dd r .
\]
Using the Bessel equation
\[
(kr)^2J_\nu''(kr)+(kr)J_\nu'(kr)+(k^2r^2-\nu^2)J_\nu(kr)=0,
\]
one finds
\[
k\frac{\dd}{\dd r}\big(rJ_\nu'(kr)\big)
=
-k^2rJ_\nu(kr)+\frac{\nu^2}{r}J_\nu(kr).
\]
Substituting this identity gives
\[
\mathcal H_\nu(\mathcal L_\nu f)(k)
=
-k^2\int_0^\infty f(r)J_\nu(kr)\,r\,\dd r
=
-k^2(\mathcal H_\nu f)(k).
\]
Applying this to \eqref{eq:v-eq} gives
\[
\partial_t \widehat{v}(t,k) = -k^2 \widehat{v}(t,k),
\qquad \widehat{v}\coloneqq \mathcal{H}_\nu v,
\]
from which we obtain $\hat{v}(t,k)=\hat{v}(k,0)e^{-k^2t}$. Using the inverse Hankel transform we obtain
\begin{equation}\label{nustiu2}
    v(r,t)=\int_0^\infty \widehat{v}(k,t)J_\nu(kr)\,k\,\dd k=\int_0^\infty e^{-k^2 t}\widehat v(k,0)J_\nu(kr)\,k\,\dd k.
\end{equation}
By definition of the Hankel transform,
\[
\widehat v(k,0)=\int_0^\infty v_0(\rho)J_\nu(k\rho)\,\rho\,\dd\rho,
\]
Substituting this into the previous formula yields
\[
v(r,t)
=
\int_0^\infty e^{-k^2 t}
\left(
\int_0^\infty v_0(\rho)J_\nu(k\rho)\rho\,\dd\rho
\right)
J_\nu(kr)k\,\dd k .
\]
Interchanging the order of integration we obtain
\begin{equation}
   \label{nustiu1}
v(r,t)
=
\int_0^\infty
\left(
\int_0^\infty e^{-k^2 t}J_\nu(kr)J_\nu(k\rho)k\,\dd k
\right)
v_0(\rho)\rho\,\dd\rho . 
\end{equation}
A known result (see e.g. \cite{GZ}[Eq.~6.633.2]) gives
\begin{equation}\label{eq:weber}
\int_0^\infty \ee^{-t k^2}J_\nu(kr)J_\nu(k\rho)\,k\,\dd k
=
\frac{1}{2t}\exp\!\left(-\frac{r^2+\rho^2}{4t}\right)
I_\nu\!\left(\frac{r\rho}{2t}\right),
\end{equation}
where $I_\nu$ is the modified Bessel function of the first kind. Recall $u(r,t)=r^{-\mu}v(r,t)$, thus $v_0(\rho)=\rho^{\mu} u_0(\rho)$. Coming back to spatial coordinates, we have obtained
\[u(x,t)=\int_{\R^N}K(x,y,t)u_0(y) dy\]
with
\begin{equation}\label{eq:final}
K(x,y,t)
=
\frac{1}{|\mathbb{S}^{N-1}|}\frac{1}{2t}\,(|x||y|)^{-\frac{N-2}{2}}
\exp\!\left(-\frac{|x|^2+|y|^2}{4t}\right)\,
I_{\nu}\!\left(\frac{|x||y|}{2t}\right),
\end{equation}
Since \(r,\rho,t>0\) and \(I_\nu(z)>0\) for \(z>0\), \(\nu\ge0\), all factors in \(K\) are positive. We now verify that the mild solution $u$ indeed satisfies the Cauchy problem \eqref{problem}. From the representation derived in \eqref{nustiu1}
\[
v(r,t)
=
\int_0^\infty
\left(
\int_0^\infty e^{-k^2 t}J_\nu(kr)J_\nu(k\rho)k\,\dd k
\right)
v_0(\rho)\rho\,\dd\rho .
\]
we observe that for every $t>0$ the Gaussian factor $e^{-k^2 t}$ ensures rapid decay in $k$.
Therefore we may differentiate under the integral sign and apply the operator
$\mathcal L_\nu$ in the $r$ variable and, using that
\[
\mathcal L_\nu[J_\nu(kr)] = -k^2 J_\nu(kr),
\]
we obtain
\[
\partial_t K_\nu(t;r,\rho)
=
\int_0^\infty (-k^2)e^{-k^2 t}J_\nu(kr)J_\nu(k\rho)\,k\,\dd k
=
\mathcal (\mathcal{L}_\nu)_r (K_\nu(t;r,\rho)).
\]
Consequently,
\[
\partial_t v(r,t)
=
\int_0^\infty \partial_t K_\nu(t;r,\rho)\,v_0(\rho)\,\rho\,\dd\rho
=\int_0^\infty (\mathcal{L}_{\nu})_r( K_\nu(t;r,\rho))\,v_0(\rho)\,\rho\,\dd\rho
=
\mathcal L_\nu v(r,t),
\]
that is,
\[
\partial_t v
=
v_{rr}+\frac{1}{r}v_r-\frac{\nu^2}{r^2}v.
\]
Recalling that $u=r^{-\mu}v$, reversing the computation performed earlier shows that
\[
\partial_t u
=
u_{rr}+\frac{N-1}{r}u_r+\frac{\lambda}{r^2}u,
\qquad t>0,\ r>0.
\]
 mIt remains to verify the initial condition. By \eqref{nustiu2} we have
\[
v(r,t)
=
\int_0^\infty e^{-k^2 t}\widehat v_0(k)J_\nu(kr)\,k\,\dd k.
\]
We first note that \(\widehat v_0\) is rapidly decaying. Indeed, since
\(v_0\in C_c^\infty(0,\infty)\), we have
\(\mathcal L_\nu^m v_0\in C_c^\infty(0,\infty)\) for every \(m\in\mathbb N\). Moreover, from
\[
\mathcal H_\nu(\mathcal L_\nu f)(k)
=
-k^2\mathcal H_\nu f(k),
\]
we obtain, by iteration with \(f=v_0\),
\begin{equation}\label{iteration}
    \mathcal H_\nu v_0(k)
=
\frac{(-1)^m}{k^{2m}}
\mathcal H_\nu(\mathcal L_\nu^m v_0)(k),
\qquad k>0.
\end{equation}

Since \(\mathcal L_\nu^m v_0\in C_c^\infty(0,\infty)\) and \(J_\nu\) is bounded on
\([0,\infty)\), the function
\[
\mathcal H_\nu(\mathcal L_\nu^m v_0)(k)
=
\int_0^\infty (\mathcal L_\nu^m v_0)(\rho)J_\nu(k\rho)\rho\,\dd\rho
\]
is bounded in \(k\). Hence, for \(k\ge1\), \eqref{iteration} gives
\[
|\widehat v_0(k)|
=
|\mathcal H_\nu v_0(k)|
\le C_m k^{-2m}.
\]
Since \(m\) is arbitrary, \(\widehat v_0\) decays faster than any negative power
of \(k\). Choosing \(m\) large enough, for instance \(m\ge2\), we get
\[
|\widehat v_0(k)|\,|J_\nu(kr)|\,k\in L^1(1,\infty).
\]
On the other hand, on \((0,1)\), the same function is integrable because
\(\widehat v_0\) and \(J_\nu\) are bounded and \(k\mapsto k\in L^1(0,1)\). Therefore
\[
k\mapsto |\widehat v_0(k)|\,|J_\nu(kr)|\,k
\]
belongs to \(L^1(0,\infty)\) for every fixed \(r>0\). We now pass to the limit \(t\downarrow0\). Since
\[
|e^{-k^2t}\widehat v_0(k)J_\nu(kr)k|
\le
|\widehat v_0(k)|\,|J_\nu(kr)|\,k,
\]
and \(e^{-k^2t}\to1\) pointwise as \(t\downarrow0\), the Dominated Convergence
Theorem gives
\[
v(r,t)
\longrightarrow
\int_0^\infty \widehat v_0(k)J_\nu(kr)\,k\,\dd k.
\]
By the inverse Hankel transform, the last integral equals \(v_0(r)\). Therefore
\[
v(r,t)\longrightarrow v_0(r),
\qquad \text{as } t\downarrow0.
\]

Recalling that \(u(r,t)=r^{-\mu}v(r,t)\), we obtain, for every \(r>0\),
\[
u(r,t)
=
r^{-\mu}v(r,t)
\longrightarrow
r^{-\mu}v_0(r)
=
u_0(r),
\qquad \text{as } t\downarrow0.
\]
Thus the initial condition is recovered pointwise away from the origin. In
particular, the convergence also holds in the sense of distributions on
\(\mathbb R^N\setminus\{0\}\).
This completes the proof.
\end{proof}

\begin{remark}
Since \(v_0\in C_c^\infty(0,\infty)\), all functions involved are supported away
from \(0\) and infinity. Hence all boundary terms in the integrations by parts
vanish. Moreover, for \(t>0\), the factor \(e^{-tk^2}\) gives enough decay in
\(k\) to justify Fubini's theorem and differentiation under the integral sign.
\end{remark}
In order to derive the non-radial hardy heat kernel, we make use of spherical harmonics and spherical harmonics decomposition of the laplacian. We refer the reader to \cite{ABR} for some standard facts on this topic.
\begin{prop}[Non-radial Hardy heat kernel]\label{nonradialkernel}
Let \(u_0\in C^\infty_c (\mathbb R^N\setminus\{0\})\). Let \(\{Y_{\ell,m}\}_{m=1}^{d_\ell}\) be an orthonormal basis of spherical
harmonics of degree \(\ell\) on \(\mathbb S^{N-1}\), satisfying
\[
    -\Delta_{\mathbb S^{N-1}}Y_{\ell,m}
    =
    \ell(\ell+N-2)Y_{\ell,m}.
\]
For each \(\ell\in\mathbb N_0\), define $
    \nu_\ell
    :=
    \sqrt{\mu^2+\ell(\ell+N-2)-\lambda}$
Then the heat kernel associated with problem \eqref{problem} is given, for \(x=r\omega\), \(y=\rho\eta\), \(r,\rho>0\), and
\(\omega,\eta\in\mathbb S^{N-1}\), by
\[
K(x,y,t)
=
\frac{1}{2t}(r\rho)^{-\mu}
\exp\!\left(-\frac{r^2+\rho^2}{4t}\right)
\sum_{\ell=0}^{\infty}
I_{\nu_\ell}\left(\frac{r\rho}{2t}\right)
\sum_{m=1}^{d_\ell}
Y_{\ell,m}(\omega)Y_{\ell,m}(\eta)
\]
In other words, the function
\[
u(x,t)
=
\int_{\mathbb R^N} K(x,y,t)\,u_0(y)\,dy
\]
belongs to \(C^\infty( (\mathbb R^N\setminus\{0\})\times(0,\infty))\) and satisfies \eqref{problem}
Moreover, \(K(x,y,t)\) is fundamental in the sense of distributions.
\end{prop}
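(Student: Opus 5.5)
The plan is to reduce to the radial case, mode by mode, through the spherical harmonic decomposition, and then reuse the proof of Proposition~\ref{radialkernel} for each angular mode.

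\textbf{Step 1: decomposition.} Write $x=r\omega$. Since $u_0\in C_c^\infty(\mathbb R^N\setminus\{0\})$, it has the expansion
\[
u_0(r\omega)=\sum_{\ell,m}u_{0,\ell,m}(r)Y_{\ell,m}(\omega),\qquad u_{0,\ell,m}(r)=\int_{\mathbb S^{N-1}}u_0(r\eta)Y_{\ell,m}(\eta)\,d\eta .
\]
Each coefficient $u_{0,\ell,m}$ lies in $C_c^\infty(0,\infty)$. Integrating by parts on the sphere gives $|u_{0,\ell,m}(r)|\le C_k(1+\ell)^{-2k}$, uniformly in $r$, for every $k$. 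We use the decomposition of the Laplacian, $\Delta=\partial_{rr}+\frac{N-1}{r}\partial_r+\frac1{r^2}\Delta_{\mathbb S^{N-1}}$. With the ansatz $u=\sum u_{\ell,m}(r,t)Y_{\ell,m}(\omega)$, each mode satisfies
\[
\partial_t u_{\ell,m}=\partial_{rr}u_{\ell,m}+\frac{N-1}{r}\partial_r u_{\ell,m}+\frac{\lambda-\ell(\ell+N-2)}{r^2}u_{\ell,m}.
\]

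\textbf{Step 2: each mode is a radial Hardy problem.} Conjugating by $u=r^{-\mu}v$, exactly as in Proposition~\ref{radialkernel}, turns each mode into
\[
\partial_t v=\mathcal L_{\nu_\ell}v,\qquad \nu_\ell^2=\mu^2+\ell(\ell+N-2)-\lambda=(\mu+\ell)^2-\lambda .
\]
Applying the Hankel transform of order $\nu_\ell$ and Weber's formula \eqref{eq:weber} gives
\[
u_{\ell,m}(r,t)=\int_0^\infty \frac1{2t}(r\rho)^{-\mu}e^{-\frac{r^2+\rho^2}{4t}}I_{\nu_\ell}\!\left(\frac{r\rho}{2t}\right)u_{0,\ell,m}(\rho)\rho^{N-1}\,d\rho .
\]
The factor $\rho^{N-1}$ comes from $\rho^{\mu}\cdot\rho\cdot\rho^{\mu}$. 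Substituting the formula for $u_{0,\ell,m}$, exchanging sum and integral, and writing $dy=\rho^{N-1}d\rho\,d\eta$ yields the claimed kernel. All the radial facts carry over verbatim with $\nu$ replaced by $\nu_\ell$: smoothness for $t>0$, the PDE for each mode, and $u_{\ell,m}(r,t)\to u_{0,\ell,m}(r)$ as $t\downarrow0$.

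\textbf{Step 3: convergence of the series (main obstacle).} This is where the real work lies. For the PDE and smoothness we need the kernel series, and its $r$- and $t$-derivatives, to converge locally uniformly on $r,\rho\in[a,b]\subset(0,\infty)$, $t\in[t_1,t_2]$. I would combine three bounds:
\begin{itemize}
\item[(i)] Since $\nu_\ell\sim\ell$, the series bound
\[
0\le I_{\nu}(z)\le \frac{(z/2)^{\nu}}{\Gamma(\nu+1)}e^{z^2/4}
\]
gives factorial decay in $\ell$ for bounded $z$.
\item[(ii)] The addition theorem gives $\sum_m|Y_{\ell,m}(\omega)Y_{\ell,m}(\eta)|\le d_\ell/|\mathbb S^{N-1}|$, and $d_\ell$ grows only polynomially in $\ell$.
\item[(iii)] The derivatives $\partial_z I_\nu$ and $\partial_t$ introduce at most polynomial factors in $\nu_\ell$ and $1/z$.
\end{itemize}
Together these give absolute, locally uniform convergence of all differentiated series. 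This justifies applying the operator termwise, and hence $u_t=\Delta u+\lambda|x|^{-2}u$ pointwise on $(\mathbb R^N\setminus\{0\})\times(0,\infty)$.

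\textbf{Step 4: initial condition.} For each mode, $|u_{\ell,m}(r,t)|\le \sup_\rho |v_{0,\ell,m}(\rho)|$ uniformly in $t$ via the Hankel representation, using $\|J_\nu\|_\infty\le1$ and the decay of $\widehat v_{0,\ell,m}$. The uniform-in-$t$ version of the integrability bound would follow from applying \eqref{iteration} with the $L^1$ norm of $\mathcal L_{\nu_\ell}^m v_{0,\ell,m}$. That norm grows only polynomially in $\ell$, and this growth is absorbed by the rapid decay of the coefficients $u_{0,\ell,m}$ obtained from angular integration by parts. Dominated convergence over $(\ell,m)$ then gives $u(x,t)\to u_0(x)$ for $x\ne0$. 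Pairing with test functions in $C_c^\infty(\mathbb R^N\setminus\{0\})$ gives the distributional statement.
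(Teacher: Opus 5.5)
Your proposal is correct and follows essentially the same route as the paper. Both expand $u_0$ in spherical harmonics, observe that each mode solves a radial Hardy equation with parameter $\nu_\ell=\sqrt{(\mu+\ell)^2-\lambda}$, apply the radial Hankel/Weber computation mode by mode, and resum, justifying the interchange by the factorial decay of $I_{\nu_\ell}$ in $\ell$ against the polynomial growth of $d_\ell$ and of the spherical harmonics.

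Your version is more complete than the paper's, which only asserts the convergence of the series and the distributional initial condition: you give explicit bounds for $I_\nu$ and its derivatives, use the addition theorem, and give a dominated-convergence argument for the initial datum. One cosmetic slip in Step 4: the Hankel route you cite gives $r^{\mu}|u_{\ell,m}(r,t)|\le\int_0^\infty|\widehat v_{0,\ell,m}(k)|\,k\,dk$, not the bound by $\sup_\rho|v_{0,\ell,m}(\rho)|$ as written (which also drops the factor $r^{-\mu}$). Your subsequent iteration argument controls exactly that integral, so the conclusion is unaffected.
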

\begin{remark}
    For $\ell=0$, we recover the radial heat kernel formula.
\end{remark}
\begin{proof}
Since the potential \(|x|^{-2}\) is radial, the angular variables can be separated.
In spherical coordinates \(x=r\omega\), one has the Laplacian formula:
\[
    \Delta
    =
    \partial_{rr}
    +
    \frac{N-1}{r}\partial_r
    +
    \frac{1}{r^2}\Delta_{\mathbb S^{N-1}}.
\]
We expand the initial datum and the solution in spherical harmonics:
\[
    u_0(r,\omega)
    =
    \sum_{\ell=0}^{\infty}
    \sum_{m=1}^{d_\ell}
    u_{0,\ell,m}(r)Y_{\ell,m}(\omega),
\]
where
\[
    u_{0,\ell,m}(r)
    =
    \int_{\mathbb S^{N-1}}
    u_0(r,\eta)Y_{\ell,m}(\eta)\,d\sigma(\eta),
\]
and similarly
\[
    u(r,\omega,t)
    =
    \sum_{\ell=0}^{\infty}
    \sum_{m=1}^{d_\ell}
    u_{\ell,m}(r,t)Y_{\ell,m}(\omega).
\]
Substituting this expansion into
\[
    u_t=\Delta u+\frac{\lambda}{r^2}u
\]
and using
\[
    \Delta_{\mathbb S^{N-1}}Y_{\ell,m}
    =
    -\ell(\ell+N-2)Y_{\ell,m},
\]
we obtain, for each pair \((\ell,m)\),
\[
    \partial_t u_{\ell,m}
    =
    \partial_{rr}u_{\ell,m}
    +
    \frac{N-1}{r}\partial_r u_{\ell,m}
    +
    \frac{\lambda-\ell(\ell+N-2)}{r^2}u_{\ell,m}.
\]
Thus the \((\ell,m)\)-component satisfies a radial Hardy heat equation with 
parameter $\lambda-\ell(\ell+N-2). $
Since
\[
    \mu^2-\lambda_\ell
    =
    \mu^2+\ell(\ell+N-2)-\lambda
    =
    (\ell+\mu)^2-\lambda,
\]
applying Proposition \ref{radialkernel} with parameter $\nu_\ell=\sqrt{(\ell+\mu)^2-\lambda}.$, we have
\[
    u_{\ell,m}(r,t)
    =
    \int_0^\infty
    K_{\ell}(r,\rho,t)
    u_{0,\ell,m}(\rho)\rho^{N-1}\,d\rho,
\]
where
\[
    K_{\ell}(r,\rho,t)
    =
    \frac{1}{\left|\mathbb{S}^{N-1}\right|}\frac{1}{2t}
    (r\rho)^{-\mu}
    \exp\!\left(-\frac{r^2+\rho^2}{4t}\right)
    I_{\nu_\ell}\left(\frac{r\rho}{2t}\right).
\]
Therefore
\[
\begin{aligned}
    u(r,\omega,t)
    &=
    \sum_{\ell=0}^{\infty}
    \sum_{m=1}^{d_\ell}
    Y_{\ell,m}(\omega)
    \int_0^\infty
    K_{\ell}(r,\rho,t)
    u_{0,\ell,m}(\rho)
    \rho^{N-1}\,d\rho  \\
    &=
    \int_0^\infty
    \int_{\mathbb S^{N-1}}
    \left[
    \sum_{\ell=0}^{\infty}
    \sum_{m=1}^{d_\ell}
    K_{\ell}(r,\rho,t)
    Y_{\ell,m}(\omega)Y_{\ell,m}(\eta)
    \right]
    u_0(\rho,\eta)
    \rho^{N-1}\,d\sigma(\eta)\,d\rho.
\end{aligned}
\]
Since \(u_0\in C_c^\infty(\mathbb R^N\setminus\{0\})\), the radial variable \(\rho\) stays in a compact subset of \((0,\infty)\). Moreover, the Bessel factor decays rapidly in \(\ell\), while the spherical harmonics grow at most polynomially. Hence the series is absolutely and locally uniformly convergent, and the interchange of the series with the integrals is justified.

Since $dy=\rho^{N-1}\,d\rho\,d\sigma(\eta),$ this gives the representation
\[
    u(x,t)
    =
    \int_{\mathbb R^N}
    K(x,y,t)u_0(y)\,dy,
\]
with
\[
K(x,y,t)
=
\sum_{\ell=0}^{\infty}
\sum_{m=1}^{d_\ell}
K_{\ell}(r,\rho,t)
Y_{\ell,m}(\omega)Y_{\ell,m}(\eta).
\]
Substituting the expression of \(K_{\ell}\) gives the desired formula.

\end{proof}

\begin{remark} The preceding formula also defines the solution for less regular radial data using the standard density argument. More precisely, assume that \(u_0\) is radial and $|x|^{\nu-\mu}u_0\in L^1(\mathbb R^N). $
Then, for every \(t>0\) and \(x\neq0\), the integral
\[
u(x,t):=\int_{\mathbb R^N}K(x,y,t)u_0(y)\,dy
\]
is well defined.

Indeed, by the small-argument asymptotic
\[
I_\nu(z)\sim \frac{z^\nu}{2^\nu\Gamma(\nu+1)},
\qquad z\to0,
\]
we have, for fixed \(x\neq0\) and \(t>0\),
\[
K(x,y,t)\le C_{x,t}|y|^{\nu-\mu}
\qquad \text{as } |y|\to0.
\]
On the other hand, for large \(|y|\), the Gaussian factor gives exponential decay. Hence the above assumptions guarantee absolute convergence of the kernel integral.

Moreover, by differentiating under the integral sign on compact subsets of
\[
(\mathbb R^N\setminus\{0\})\times(0,\infty),
\]
the function \(u\) belongs to
\[
C^\infty\bigl((\mathbb R^N\setminus\{0\})\times(0,\infty)\bigr)
\]
and satisfies
\[
u_t=\Delta u+\lambda |x|^{-2}u
\]
pointwise for \(t>0\) and \(x\neq0\). Thus \(u\) is a classical solution away from $x=0$.
\end{remark}

\begin{lemma}\label{gammaint}
Let \(N\ge1\), \(1\le p<\infty\), \(a\in\mathbb R\), and \(c>0\). Assume
\[
ap+N>0.
\]
Then, for every \(t>0\),
\[
\left\|
|x|^a e^{-c\frac{|x|^2}{t}}
\right\|_{L^p(\mathbb R^N)}
=
C_{N,p,a,c}\,
t^{\frac a2+\frac{N}{2p}},
\]
where
\[
C_{N,p,a,c}
=
\left[
\frac{|\mathbb S^{N-1}|}{2}
(cp)^{-\frac{ap+N}{2}}
\Gamma\left(\frac{ap+N}{2}\right)
\right]^{1/p}.
\]
\end{lemma}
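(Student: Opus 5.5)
The identity is a direct computation in polar coordinates followed by a scaling substitution and the Euler integral for $\Gamma$. I will compute the $p$-th power of the norm and then take the $p$-th root at the end. Writing $x=r\omega$ with $dx=r^{N-1}\,dr\,d\sigma(\omega)$, and using that the integrand is radial, we get
\[
\left\||x|^a e^{-c\frac{|x|^2}{t}}\right\|_{L^p(\mathbb R^N)}^p
=
|\mathbb S^{N-1}|\int_0^\infty r^{ap+N-1}e^{-\frac{cp\,r^2}{t}}\,dr .
\]
The integral converges at $r=0$ precisely because $ap+N>0$, so the exponent $ap+N-1$ exceeds $-1$. It converges at infinity thanks to the Gaussian factor, since $cp>0$.

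Next I substitute $s=\frac{cp\,r^2}{t}$, so that $r=(t s/(cp))^{1/2}$ and $dr=\frac12 (t/(cp))^{1/2}s^{-1/2}\,ds$. This gives
\[
r^{ap+N-1}\,dr=\frac12\left(\frac{t}{cp}\right)^{\frac{ap+N}{2}}s^{\frac{ap+N}{2}-1}\,ds .
\]
Hence the radial integral becomes
\[
\frac12\left(\frac{t}{cp}\right)^{\frac{ap+N}{2}}\int_0^\infty s^{\frac{ap+N}{2}-1}e^{-s}\,ds
=\frac12 (cp)^{-\frac{ap+N}{2}}\,\Gamma\!\left(\frac{ap+N}{2}\right)t^{\frac{ap+N}{2}},
\]
and $\Gamma$ is finite here since its argument is positive.

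Multiplying by $|\mathbb S^{N-1}|$ and taking the $p$-th root yields the constant $C_{N,p,a,c}$ exactly as stated. The power of $t$ becomes $t^{\frac{ap+N}{2p}}=t^{\frac a2+\frac N{2p}}$, which is the claimed rate.

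There is no real obstacle in this argument. The only point needing care is to record where the hypothesis $ap+N>0$ enters, namely integrability at the origin and positivity of the $\Gamma$ argument, and to track the exponents in the change of variables correctly.
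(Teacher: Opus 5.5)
Your proposal is correct and follows essentially the same route as the paper: polar coordinates, the substitution $s=cp\,r^2/t$ reducing the radial integral to $\Gamma\bigl(\tfrac{ap+N}{2}\bigr)$, then taking the $p$-th root. Your exponent bookkeeping matches the paper's, and you correctly note that the hypothesis $ap+N>0$ is used for integrability at the origin (equivalently, for finiteness of the Gamma integral).
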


\begin{proof}
By definition,
\[
\left\|
|x|^a e^{-c\frac{|x|^2}{t}}
\right\|_{L^p(\mathbb R^N)}^p
=
\int_{\mathbb R^N}
|x|^{ap}
e^{-cp\frac{|x|^2}{t}}
\,dx.
\]
Using polar coordinates, we obtain
\[
\int_{\mathbb R^N}
|x|^{ap}
e^{-cp\frac{|x|^2}{t}}
\,dx
=
|\mathbb S^{N-1}|
\int_0^\infty
r^{ap+N-1}
e^{-cp\frac{r^2}{t}}
\,dr.
\]
Set
\[
r=\left(\frac{t}{cp}\right)^{1/2}s^{1/2},
\qquad
dr=
\frac12
\left(\frac{t}{cp}\right)^{1/2}
s^{-1/2}\,ds.
\]
Therefore
\[
\begin{aligned}
\int_{\mathbb R^N}
|x|^{ap}
e^{-cp\frac{|x|^2}{t}}
\,dx
&=
\frac{|\mathbb S^{N-1}|}{2}
\left(\frac{t}{cp}\right)^{\frac{ap+N}{2}}
\int_0^\infty
s^{\frac{ap+N}{2}-1}e^{-s}\,ds
\\
&=
\frac{|\mathbb S^{N-1}|}{2}
(cp)^{-\frac{ap+N}{2}}
\Gamma\left(\frac{ap+N}{2}\right)
t^{\frac{ap+N}{2}}.
\end{aligned}
\]
The Gamma integral is finite exactly when
\[
\frac{ap+N}{2}>0\iff ap+N>0,
\]
Thus
\[
\left\|
|x|^a e^{-c\frac{|x|^2}{t}}
\right\|_{L^p(\mathbb R^N)}^p
=
\frac{|\mathbb S^{N-1}|}{2}
(cp)^{-\frac{ap+N}{2}}
\Gamma\left(\frac{ap+N}{2}\right)
t^{\frac{ap+N}{2}}.
\]
Taking the \(p\)-th root gives
\[
\left\|
|x|^a e^{-c\frac{|x|^2}{t}}
\right\|_{L^p(\mathbb R^N)}
=
\left[
\frac{|\mathbb S^{N-1}|}{2}
(cp)^{-\frac{ap+N}{2}}
\Gamma\left(\frac{ap+N}{2}\right)
\right]^{1/p}
t^{\frac a2+\frac{N}{2p}}.
\]
This proves the claim.
\end{proof}
\begin{lemma}\label{localization}
Let \(c>0\), \(c_0>0\), \(1\le p<\infty\), and \(\beta\in\mathbb R\). Define
\[
H(\eta)
:=
\left(
\int_{c_0/\eta}^{\infty}
s^\beta e^{-c(s-\eta)^2}\,ds
\right)^{1/p},
\qquad \eta>0.
\]
Then the following estimates hold.

\begin{enumerate}
    \item For every \(M>0\), there exists \(C_M>0\) such that
    \[
    H(\eta)\le C_M \eta^M,
    \qquad 0<\eta\le 1.
    \]

    \item There exists \(C>0\) such that
    \[
    H(\eta)\le C\eta^{\beta/p},
    \qquad \eta\ge 1.
    \]
\end{enumerate}
\end{lemma}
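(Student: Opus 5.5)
For part (1), I will exploit that for $0<\eta\le 1$ the lower limit $c_0/\eta$ is large compared with $\eta$. On the integration range $s\ge c_0/\eta$ we have $s-\eta\ge s-\eta^2 s/c_0$. So for $\eta\le \eta_0:=\min\{1,\sqrt{c_0/2}\}$ we get $s-\eta\ge s/2$, and hence $e^{-c(s-\eta)^2}\le e^{-cs^2/4}$. This gives
\[
H(\eta)^p\le \int_{c_0/\eta}^\infty s^\beta e^{-cs^2/4}\,ds .
\]
I will then bound $s^\beta e^{-cs^2/8}$ by a constant $C_\beta$ for $s\ge c_0$ (if $\beta<0$ simply use $s^\beta\le c_0^\beta$). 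The remaining integral $\int_{c_0/\eta}^\infty e^{-cs^2/8}\,ds$ is at most $C e^{-c c_0^2/(8\eta^2)}$ by the standard Gaussian tail bound, since $c_0/\eta\ge c_0$. The function $e^{-c c_0^2/(8\eta^2)}$ is $O(\eta^{Mp})$ as $\eta\to0$ for every $M$, because $\eta^{-Mp}e^{-a/\eta^2}$ is bounded on $(0,1]$. Taking $p$-th roots gives $H(\eta)\le C_M\eta^M$ on $(0,\eta_0]$.

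On the compact interval $[\eta_0,1]$, $H$ is bounded uniformly. Indeed, the integral is dominated by $\int_{c_0}^\infty s^\beta e^{-c(s-\eta)^2}ds\le \int_{c_0}^\infty s^\beta e^{-c(s-1)^2}\,ds\cdot C$, using $(s-\eta)^2\ge (s-1)^2 - 2s$ crudely, or more simply continuity in $\eta$ of a convergent integral. Since $\eta^M\ge\eta_0^M$ there, enlarging $C_M$ finishes part (1).

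For part (2), with $\eta\ge1$, I drop the lower limit to $c_0/\eta$ and write the integral as $\int_{c_0/\eta}^\infty$. I split it into $s\in[\eta/2,2\eta]$ and the complement. On $[\eta/2,2\eta]$, $s^\beta\le C\eta^\beta$ (whatever the sign of $\beta$), and $\int e^{-c(s-\eta)^2}ds\le\sqrt{\pi/c}$, so this piece is $\le C\eta^\beta$. Off this window we have $|s-\eta|\ge \eta/2$ when $s<\eta/2$, and $|s-\eta|\ge s/2$ when $s>2\eta$.

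For $s>2\eta$ the piece is $\le\int_{2\eta}^\infty s^\beta e^{-cs^2/4}ds$, which is super-polynomially small and hence $\le C\eta^\beta$. For $c_0/\eta\le s<\eta/2$ the piece is $\le e^{-c\eta^2/8}\int_{c_0/\eta}^{\eta/2}s^\beta e^{-c(s-\eta)^2/2}ds$. The remaining integral is at most polynomial in $\eta$: if $\beta\ge0$ use $s^\beta\le\eta^\beta$, and if $\beta<0$ use $s^\beta\le (c_0/\eta)^{\beta}=c_0^\beta\eta^{|\beta|}$. This is exactly where the cutoff $c_0/\eta$ is used, to avoid the nonintegrable singularity at $0$ for $\beta\le -1$. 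Multiplied by $e^{-c\eta^2/8}$, this piece is again $\le C\eta^\beta$ for $\eta\ge1$. Summing the three pieces and taking $p$-th roots gives $H(\eta)\le C\eta^{\beta/p}$.

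The main obstacle is the case $\beta$ very negative, where the lower cutoff is essential. Both parts hinge on showing that the region near $c_0/\eta$ is exponentially suppressed by the Gaussian factor, and I will make that comparison explicit as above.
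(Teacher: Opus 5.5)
Your proposal is correct and follows essentially the paper's proof. Part (1) uses the same threshold $\eta_0=\min\{1,\sqrt{c_0/2}\}$, the comparison $e^{-c(s-\eta)^2}\le e^{-cs^2/4}$, and compactness on $[\eta_0,1]$. Part (2) uses the same three-region split (near the cutoff $c_0/\eta$, a window around $\eta$, and the tail), and since your bounds never need $c_0/\eta\le\eta/2$, you avoid the paper's separate compactness step on $[1,\eta_1]$. The only slip is the ``crude'' inequality $(s-\eta)^2\ge(s-1)^2-2s$, which is false for small $s$: at $s=\eta$ it reads $0\ge\eta^2-4\eta+1$. Your fallback via continuity and domination still settles $[\eta_0,1]$, as does the correct bound $(s-\eta)^2\ge(s-1)^2-1$ for $0\le\eta\le1$, $s\ge0$.
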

\begin{proof}
Let
\[
\eta_0:=\min\left\{1,\sqrt{\frac{c_0}{2}}\right\}\leq 1\leq
 \eta_1:=\max\left\{1,\sqrt{2c_0}\right\}.
\]

If \(0<\eta\le \eta_0\), then $
s\ge \frac{c_0}{\eta}$ implies $s/2\ge \eta.$ Hence $
e^{-c(s-\eta)^2}
\le
e^{-cs^2/4}.$ Therefore
\[
\int_{c_0/\eta}^{\infty}
s^\beta e^{-c(s-\eta)^2}\,ds
\le
\int_{c_0/\eta}^{\infty}
s^\beta e^{-cs^2/4}\,ds.
\]
The Gaussian term decays faster than any power. Hence, for every \(L>0\),
there exists \(C_L>0\) such that
\[
\int_{c_0/\eta}^{\infty}
s^\beta e^{-cs^2/4}\,ds
\le
C_L \eta^L,
\qquad 0<\eta\le \eta_0.
\]
Choosing \(L=Mp\), we get
\[
H(\eta)\le C_M\eta^M,
\qquad 0<\eta\le \eta_0.
\]
On the compact interval \([\eta_0,1]\), the function \(H\) is uniformly
bounded. Indeed, if \(\eta\in[\eta_0,1]\), then $\frac{c_0}{\eta}\ge c_0, $
so the lower limit of integration stays bounded away from zero. Hence the
factor \(s^\beta\) has no singularity on the interval of integration, even if
\(\beta<0\). Moreover, the exponential term gives Gaussian decay as
\(s\to\infty\). Therefore there exists \(C_0>0\) such that
\[
H(\eta)\le C_0,
\qquad \eta\in[\eta_0,1].
\]
Since \(\eta\ge\eta_0\) on \([\eta_0,1]\), we have $
\eta^M\ge \eta_0^M. $
Equivalently, $
1\le \eta_0^{-M}\eta^M.$ Thus
\[
H(\eta)\le C_0
\le C_0\eta_0^{-M}\eta^M,
\qquad \eta\in[\eta_0,1].
\]
Therefore, after substituting $C_M:=C_0\eta_0^{-M}$, we obtain
\[
H(\eta)\le C_M\eta^M,
\qquad \eta_0\le\eta\le1.
\]
Similarly, since the function \(\eta\mapsto \eta^{\beta/p}\) is positive and
continuous on \([1,\eta_1]\), there exists \(m>0\) such that
\[
\eta^{\beta/p}\ge m,
\qquad \eta\in[1,\eta_1].
\]
Therefore
\[
H(\eta)\le C_1
\le C_1m^{-1}\eta^{\beta/p}:=C\eta^{\beta/p},
\qquad \eta\in[1,\eta_1].
\]
We now prove the estimate for \(\eta\ge \eta_1\). Since $\eta/2\geq c_0/\eta$,  we split the integral as
\[
\int_{c_0/\eta}^{\infty}
s^\beta e^{-c(s-\eta)^2}\,ds
=
I_1+I_2+I_3,
\]
where
\[
I_1=
\int_{c_0/\eta}^{\eta/2}
s^\beta e^{-c(s-\eta)^2}\,ds,
\]
\[
I_2=
\int_{\eta/2}^{3\eta/2}
s^\beta e^{-c(s-\eta)^2}\,ds,
\]
and
\[
I_3=
\int_{3\eta/2}^{\infty}
s^\beta e^{-c(s-\eta)^2}\,ds.
\]

On the middle region, $\displaystyle\frac{\eta}{2}\leq s\leq \frac{3\eta}{2}$. Hence $
s^\beta\le C\eta^\beta, $
and therefore
\[
I_2
\le
C\eta^\beta
\int_{\eta/2}^{3\eta/2}
e^{-c(s-\eta)^2}\,ds
\le C\eta^{\beta}\int_{-\infty}^{\infty}e^{-cx^2}dx\leq
C\eta^\beta.
\]

On the left region, \(s\le\eta/2\), so $
|s-\eta|\ge \frac{\eta}{2}. $
Thus
\[
e^{-c(s-\eta)^2}
\le
e^{-c\eta^2/4}.
\]
Therefore
\[
I_1
\le
e^{-c\eta^2/4}
\int_{c_0/\eta}^{\eta/2}s^\beta\,ds.
\]
The last integral grows at most like a fixed power of \(\eta\). More precisely,
there exists \(K>0\), depending only on \(\beta\), such that
\[
\int_{c_0/\eta}^{\eta/2}s^\beta\,ds
\le
C\eta^K,
\qquad \eta\ge1.
\]
Since the exponential factor dominates every power of \(\eta\), we get
\[
I_1\le C\eta^\beta,
\qquad \eta\ge1.
\]

It remains to estimate \(I_3\). Put
\[
\sigma=s-\eta.
\]
Then
\[
I_3
=
\int_{\eta/2}^{\infty}
(\eta+\sigma)^\beta e^{-c\sigma^2}\,d\sigma.
\]
If \(\beta\le0\), then for \(s=\eta+\sigma\ge3\eta/2\), thus $(\eta+\sigma)^\beta\le C\eta^\beta, $
and hence
\[
I_3\le C\eta^\beta
\int_{\eta/2}^{\infty}e^{-c\sigma^2}\,d\sigma
\le
C\eta^\beta.
\]
If \(\beta>0\), then, since \(\eta\ge1\),
\[
(\eta+\sigma)^\beta
\le
C\eta^\beta(1+\sigma)^\beta.
\]
Thus
\[
I_3
\le
C\eta^\beta
\int_{\eta/2}^{\infty}
(1+\sigma)^\beta e^{-c\sigma^2}\,d\sigma
\le
C\eta^\beta.
\]
Combining the estimates for \(I_1,I_2,I_3\), we obtain
\[
\int_{c_0/\eta}^{\infty}
s^\beta e^{-c(s-\eta)^2}\,ds
\le
C\eta^\beta,
\qquad \eta\ge1.
\]
Taking the \(p\)-th root gives $
H(\eta)\le C\eta^{\beta/p},
\qquad \eta\ge1. $
completing the proof.
\end{proof}
\begin{lemma}\label{bumpfunctions}
Let $n\geq 1$. Then there exist radial functions
\[
\eta_0,\eta_1,\dots,\eta_n\in C_c^\infty(\mathbb R^N)
\]
such that
\[
M_{\nu,i}(\eta_j)=\delta_{ij},
\qquad 0\le i,j\le n.
\]
\end{lemma}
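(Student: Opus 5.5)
The statement is a finite-dimensional linear algebra fact. I would reduce it to the invertibility of a Gram-type matrix. Fix a nonnegative radial bump \(\theta\in C_c^\infty(\mathbb R^N)\), not identically zero, supported in the annulus \(\{1\le|x|\le2\}\). Supporting it away from the origin makes every weight \(|x|^{\nu-\mu+2i}\) smooth and bounded on the support, so all moments are finite even when \(\nu-\mu<0\). Consider the \(n+1\) radial test functions
\[
\theta_k(x):=|x|^{2k}\theta(x),\qquad k=0,\dots,n .
\]
They are still in \(C_c^\infty(\mathbb R^N)\), since \(|x|^{2k}\) is a polynomial.

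Next I would form the matrix
\[
G_{ik}:=M_{\nu,i}(\theta_k)=\int_{\mathbb R^N}|x|^{\nu-\mu+2i+2k}\theta(x)\,dx ,\qquad 0\le i,k\le n .
\]
This is the Gram matrix of the functions \(1,|x|^2,\dots,|x|^{2n}\) in \(L^2(w\,dx)\) with positive weight \(w=|x|^{\nu-\mu}\theta\). Equivalently, in the variable \(s=|x|^2\), it is a Hankel moment matrix of a positive measure on \([1,4]\). For \(c\in\mathbb R^{n+1}\) one has
\[
c^{T}Gc=\int_{\mathbb R^N}\Big(\sum_{k}c_k|x|^{2k}\Big)^2 |x|^{\nu-\mu}\theta(x)\,dx .
\]
This quantity is \(\ge0\). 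It vanishes only if the polynomial \(\sum_k c_k s^k\) vanishes on the set where \(\theta>0\). That set contains an interval of values of \(s\), so \(c=0\). Hence \(G\) is positive definite, and in particular invertible.

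Finally I would set \(\eta_j:=\sum_{k}(G^{-1})_{kj}\theta_k\). Each \(\eta_j\) is radial and in \(C_c^\infty\). By linearity of \(M_{\nu,i}\),
\[
M_{\nu,i}(\eta_j)=\sum_k G_{ik}(G^{-1})_{kj}=\delta_{ij}.
\]
There is no genuine obstacle. The only step needing care is the positive definiteness of \(G\), which rests on the support argument: a nonzero polynomial has only finitely many zeros.
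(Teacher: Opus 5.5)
Your proof is correct, and it takes a different route from the paper's.

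The paper places \(n+1\) bumps \(\rho_{k,\varepsilon}\) on pairwise disjoint thin annuli around distinct radii \(0<r_0<\dots<r_n\) and normalizes them so that \(M_{\nu,0}(\rho_{k,\varepsilon})=1\). It then shows that the moment matrix \(B_\varepsilon=(M_{\nu,i}(\rho_{k,\varepsilon}))_{i,k}\) converges to the Vandermonde matrix \((r_k^{2i})_{i,k}\) as \(\varepsilon\to0\). Invertibility for small \(\varepsilon\) follows by continuity, and the \(\eta_j\) are built from \(B_\varepsilon^{-1}\) exactly as you build them from \(G^{-1}\).

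You instead use one fixed bump \(\theta\) and the family \(|x|^{2k}\theta\). Your moment matrix is then the Hankel/Gram matrix \(\int s^{i+k}\,d\sigma(s)\) of a positive measure whose support contains an interval. Positive definiteness gives invertibility exactly, with no limiting argument and no small parameter to choose. It holds for every nonnegative, nonzero radial bump supported away from the origin.

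What the paper's construction buys in exchange is an explicit limiting matrix: a Vandermonde matrix in the nodes \(r_k^2\). So, for small \(\varepsilon\), the \(\eta_j\) are close to combinations of normalized measures on the spheres \(|x|=r_k\) with explicitly computable weights.

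Both arguments are elementary. Yours is slightly cleaner and more robust, since it replaces an approximation-plus-continuity step with a one-line positivity argument.
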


\begin{proof}
Choose \(n+1\) distinct radii
\[
0<r_0<r_1<\cdots<r_n.
\]
For each \(k=0,\dots,n\), choose a nonnegative radial bump function
\[
\rho_{k,\varepsilon}\in C_c^\infty(\mathbb R^N)
\]
supported in the annulus
\[
\{x\in\mathbb R^N: ||x|-r_k|<\varepsilon\},
\]
where \(\varepsilon>0\) is small enough so that these annuli are pairwise
disjoint. Thus the moments $M_{\nu,j}(\rho_{k,\epsilon})$ are finite, and, without loss of generality we can assume
\[
M_{\nu,0}(\rho_{k,\varepsilon})
=
\int_{\mathbb R^N}|x|^{\nu-\mu}\rho_{k,\varepsilon}(x)\,dx
=1.
\]
For \(i=0,\dots,n\), we then have
\begin{equation}
    \begin{split}
        |M_{\nu,i}(\rho_{k,\varepsilon})-r_k^{2i}|
&=
\left|\int_{\mathbb R^N}|x|^{\nu-\mu+2i}\rho_{k,\varepsilon}(x)\,dx -r_k^{2i}\right|\\
&\leq\int_{\mathbb R^N}\left||x|^{2i}-r_k^{2i}\right||x|^{\nu-\mu}\rho_{k,\varepsilon}(x)\,dx.\\
&\leq \sup_{||x|-r_k|<\varepsilon}\left||x|^{2i}-r_k^{2i}\right|\int_{\R^N}|x|^{\nu-\mu}\rho_{k,\varepsilon}(x)dx\\
&=\sup_{||x|-r_k|<\varepsilon}\left||x|^{2i}-r_k^{2i}\right|\stackrel{\varepsilon\to 0}{\longrightarrow} 0
    \end{split}
\end{equation}
Hence $M_{\nu,i}(\rho_{k,\varepsilon})\to r_k^{2i}$ as $\varepsilon\to 0$. Consider the matrix
\[
B_\varepsilon=(B_{ik})_{0\le i,k\le n},
\qquad
B_{ik}:=M_{\nu,i}(\rho_{k,\varepsilon}).
\]
By the previous convergence, $b_\varepsilon\longrightarrow V
\qquad \text{as } \varepsilon\to0, $ where $V=(r_k^{2i})_{0\le i,k\le n}. $
The matrix \(V\) is a Vandermonde matrix associated with the distinct numbers
\[
r_0^2,r_1^2,\dots,r_n^2,
\]
hence \(V\) is invertible. Therefore, for \(\varepsilon>0\) sufficiently small,
\(B_\varepsilon\) is also invertible.

Fix such an \(\varepsilon\), and write \(\rho_k=\rho_{k,\varepsilon}\) and
\(B=B_\varepsilon\). Define
\[
\eta_j
:=
\sum_{k=0}^{n}
(B^{-1})_{kj}\rho_k,
\qquad j=0,\dots,n.
\]
Then, for every \(0\le i,j\le n\),
\[
\begin{aligned}
M_{\nu,i}(\eta_j)
&=
\sum_{k=0}^{n}
(B^{-1})_{kj}M_{\nu,i}(\rho_k)  \\
&=
\sum_{k=0}^{n}
B_{ik}(B^{-1})_{kj} \\
&=
\delta_{ij}.
\end{aligned}
\]
This proves the claim.
\end{proof}
\section*{Acknowledgements}

I express my gratitude to my Ph.D. supervisor, Prof. Dr. Cristian Cazacu, for his
guidance, support, and valuable comments throughout the preparation of this work.
This work was partially supported by a doctoral fellowship from the Doctoral
School of Mathematics, University of Bucharest.

\end{document}